\newtheorem{theorem}{Theorem}[section]
\newtheorem{proposition}[theorem]{Proposition}
\newtheorem{corollary}[theorem]{Corollary}
\newtheorem{lemma}[theorem]{Lemma}
\newtheorem{example}[theorem]{Example}
\newtheorem{remark}[theorem]{Remark}
\newtheorem{definition}[theorem]{Definition}
\begin{document}
\title[Characterizations of some rotundity properties...]{Characterizations of some rotundity\\ properties in terms of farthest points}
\author{Arunachala Prasath C}
\address{Department of Mathematics, National Institute of Technology Tiruchirappalli,\\	Tiruchirappalli - 620015, Tamil Nadu, India}
\email{caparunachalam@gmail.com}
\author{Vamsinadh Thota}
\address{Department of Mathematics, National Institute of Technology Tiruchirappalli,\\	Tiruchirappalli - 620015, Tamil Nadu, India}
\email[corresponding author]{vamsinadh@nitt.edu}
\subjclass{Primary 46B20; Secondary 41A65, 41A52}
\keywords{Farthest points; Remotal sets; Uniform rotundity; Uniformly strongly uniquely remotal; Generalized diameter.}
\date{\today}
\begin{abstract}
We characterize rotund, uniformly rotund, locally uniformly rotund and compactly locally uniformly rotund spaces in terms of sets of (almost) farthest points from the unit sphere using the generalized diameter. For this we introduce few remotality properties using the sets of almost farthest points. As a consequence, we obtain some characterizations of the aforementioned rotundity properties in terms of existing proximinality notions.
\end{abstract}
\maketitle
\section{Introduction}\label{sec1}
Let $X$ be a real Banach space. The closed unit ball and the unit sphere of $X$ are denoted by $B_X$ and $S_X$ respectively. For a non-empty bounded subset $F$ of $X$, $x\in X$ and $\delta\geq 0$, we denote $r(F,x)=sup\{\|x-y\|:y\in F\}$ and $Q_F(x,\delta)=\left\{y\in F:\|x-y\|\geq r(F,x)-\delta\right\}$. We write $Q_{F}(x,0)$, the set of all farthest points from $F$ to $x$, as $Q_{F}(x)$. It is clear that for $\delta>0$, the set of almost farthest points from $F$ to $x$, $Q_F(x,\delta)$, is non-empty. If $Q_{F}(x)$ is non-empty then we say $F$ is remotal at $x$. The collection of all non-empty closed(respectively, closed and bounded) subsets of $X$ is denoted by $CL(X)$(respectively, $CB(X)$).

\par Various rotundity notions given in the following definition play a vital role in many branches of functional analysis, particularly in best approximation theory and geometry of Banach spaces.
\begin{definition}\label{rdef}
	The space $X$ is said to be
	\begin{enumerate}
		\item rotund \cite{Clarkson} if $\left\|\frac{x_1+x_2}{2}\right\|<1$ whenever $x_1,x_2\in S_X$ and $x_1\neq x_2$;
		\item uniformly rotund (in short, $UR$) \cite{Clarkson} if $\|x_n-y_n\|\to 0$ whenever $(x_{n})$ and $(y_n)$ are two sequences in $S_X$ such that $\left\|\frac{x_n+y_n}{2}\right\|\to1;$
		\item locally uniformly rotund\ (in short, LUR) \cite{Lovaglia} if $x_n\to x$ whenever $x\in S_X$ and $(x_n)$ is a sequence in $S_X$ such that $\left\|\frac{x_n+x}{2}\right\|\to1;$
		\item compactly locally uniformly rotund (in short, CLUR) \cite{Vlasov} if $(x_n)$ has a convergent subsequence  whenever $x\in S_X$ and $(x_n)$ is a sequence in $S_X$ such that $\left\|\frac{x_n+x}{2}\right\|\to1.$	
	\end{enumerate}
\end{definition}

Several characterizations of the above rotundity notions are available in the literature (see for instance \cite{SP,UC,thota}). In this paper efforts were made to obtain some characterizations for the rotundity properties, given in the  \Cref{rdef}, using the sets containing farthest points and(or) almost farthest points. For this we need the following notions.

\begin{definition}\label{defn}
	Let $A,F$ be non-empty subsets of $X$ and $F$ be bounded. Then the set $F$ is said to be
	\begin{enumerate}
		\item remotal \cite[page 486]{Pai} on $A$ if $Q_{F}(x)\neq\phi$ for each $x\in A;$
		\item uniquely remotal on $A$ if  $Q_{F}(x)$ is singleton for each $x\in A;$
		\item strongly remotal on $A$  if for every $x\in A$ and $\epsilon>0$ there exists a $\delta=\delta(\epsilon,x)>0$ such that, $Q_{F}(x,\delta)\subseteq Q_{F}(x)+\epsilon B_X;$
		\item strongly uniquely remotal (in short, SUR) on $A$ if $F$ is both uniquely remotal and strongly remotal on $A;$
		\item uniformly strongly uniquely remotal (in short, USUR) on $A$ if $F$ is uniquely remotal on $A$ and for every $\epsilon>0$ there exists a $\delta=\delta(\epsilon)>0$ such that, $Q_{F}(x,\delta)\subseteq Q_{F}(x)+\epsilon B_X$ for every $x\in A;$

		\item sup-compact \cite[page 486]{Pai} on $A$ if for each $x\in A$ and every maximizing sequence $(y_n)$ in $F$, i.e. a sequence satisfying $\|x-y_n\|\to r(F,x)$, has a subsequence that converges to an element in $F$.
	\end{enumerate}
\end{definition}

These remotality properties are analogous to the notions from the theory of best approximation, respectively proximinal, Chebyshev, strongly proximinal, strongly Chebyshev, uniformly strongly Chebyshev and approximatively compact. For the definitions and some related results on these notions we refer to \cite{UC, thota}.

The paper is organized as follows. In section 2, we present some preliminary results that are needed to prove our main results. First we observe basic implications among the remotality properties given in  \Cref{defn} and we also present some counter examples to demonstrate these implications are strict. Further, we relate some remotality properties with the convergence of sets of almost farthest points, which also exhibit the interrelations among these properties. In addition, we prove that the closed unit ball and the unit sphere possess the remotality properties in a similar way.

\par In section 3, we present some characterizations of rotund, uniformly rotund, locally uniformly rotund and compactly locally uniformly rotund spaces. In particular, we prove that the space is rotund if and only if the corresponding unit sphere is uniquely remotal at every non-zero element. We observe that the strong unique remotality (respectively, sup-compactness) of the unit sphere at every non-zero point is necessary and sufficient for the local uniform rotundity (respectively, compactly local uniform rotundity) of the space. Further, we also present an uniform version of this result which provides a characterization for uniformly rotund spaces, i.e. the space is uniformly rotund if and only if the unit sphere is uniformly strongly uniquely remotal on outside the closed unit ball. As a consequence of these results we obtain characterizations of uniform rotundity and locally uniform rotundity in terms of nearly best approximation sets as well.

\section{Preliminaries}\label{sec2}

We begin this section with some relations among the remotality properties.
\begin{remark}\label{implications}
The following implications are easy to verify.
\begin{enumerate}
\item USUR $\Rightarrow$ SUR $\Rightarrow$ Sup-compact $\Rightarrow$ Strongly remotal $\Rightarrow$ Remotal.

\item USUR $\Rightarrow$ SUR $\Rightarrow$ Uniquely remotal $\Rightarrow$ Remotal.
\end{enumerate}
However, none of these implications can be reversed in general.
\end{remark}
The following example illustrates that in general remotality does not imply any of the other properties given in \Cref{defn}.
\begin{example}
Let $X=(\ell_\infty, \|\cdot\|_\infty)$ and $x=(0,\frac{1}{2}, \frac{2}{3},\frac{3}{4},\ldots) \in S_X.$ Clearly, $B_X$ is remotal at $x.$ Let $z=(z_k) \in B_X,$ observe that $z \in Q_{B_X}(x)$ iff $(z_k)$ has a subsequence that converges to $-1.$ Thus, $B_X$ is not uniquely remotal. Note that $-e_{n} \in Q_{B_X}\left(x, \frac{1}{n}\right)$ for every $n \in \mathbb{N}.$ Hence, $(-e_n)$ is a maximizing sequence in $B_X$ for $x.$ Therefore, $B_X$ is not sup-compact at $x.$ In fact, since $d(-e_n, Q_{B_X}(x)) \geq 1$ for every $n \in \mathbb{N}$ it follows that $B_X$ is not strongly remotal at $x.$
\end{example}

The following example demonstrates that strong remotality is strictly weaker to the sup-compactness (hence, to SUR) in general.
\begin{example}
Let $X=(\ell_1, \|\cdot\|_1).$ For $-e_1 \in S_X,$ since $e_n \in Q_{S_X}(-e_1)$ for every $n \in \mathbb{N},$ it follows that $S_X$ is neither sup-compact nor uniquely remotal at $-e_1.$ Observe that for $ y=(y_k) \in S_X$ we have  $y \in Q_{S_X}(-e_1)$ if and only if $0\leq y_1\leq1.$ Thus, for any $w=(w_k) \in S_X$, we have $w'=(1-\sum\limits_{k=2}^\infty|w_k|, w_2, w_3,\ldots) \in  Q_{S_X}(-e_1).$ Further, for any $\delta > 0$ and $z=(z_k) \in Q_{S_X}(-e_1, \delta),$ notice that either $z\in Q_{S_X}(-e_1) $ or $|z_1| \leq \frac{\delta}{2}.$ Now, for any  $z=(z_k) \in Q_{S_X}(-e_1, \delta)$ define  $\overline{z}=\begin{cases}z', & \mbox{if } z_1 <0;\\ z, & \mbox{if }  z_1 \geq 0. \end{cases}$  It is easy to check that $\|z-\overline{z}\| \leq \delta$ for every $z\in Q_{S_X}(-e_1, \delta).$ Therefore, for $0 < \delta < \epsilon,$ we have $Q_{S_X}(-e_1, \delta) \subseteq Q_{S_X}(-e_1)+\epsilon B_X.$ i.e $S_X$ is strongly remotal at $x=-e_1.$
\end{example}

We remark that our main results will provide several examples to see the other implications in \Cref{implications} cannot be reversed in general. However, there exist some interrelations among these notions. To see this we need the following.




The Hausdorff distance between any pair of sets $A, B \in CB(X),$ denoted by $H(A,B)$, is defined as $H(A,B)=\inf\{r>0: A \subseteq B+rB_X \mbox{and } B \subseteq A+rB_X\}.$ The measure of non-compactness \cite{Banas} of a non-empty subset $A$ of $X$, is defined as $\alpha(A)=\inf\{\epsilon>0: A \subseteq E+\epsilon B_X \mbox{ for some finite subset } E \mbox{ of } X \}$.

\begin{definition}
Let $(C_n)$ be a sequence in $CB(X)$ and $C_0\in CB(X)$. We say the sequence $(C_n)$ converges to $C_0$ in the
	\begin{enumerate}
		\item upper Vietoris sense, denoted by $C_n\xrightarrow{V^+}\;C_0,$ if $C_n\subseteq U$ eventually whenever  $U$ is an open set in $X$ such that $C_0\subseteq U;$
		\item lower Vietoris sense, denoted by $C_n\xrightarrow{V^-}\;C_0,$ if $C_n\cap U\neq \phi$ eventually whenever  $U$ is an open set in $X$ such that $C_0\cap U\neq \phi;$
		\item Vietoris sense, denoted by $C_n\xrightarrow{V}\;C_0,$ if $C_n\xrightarrow{V^+}\;C_0$ and $C_n\xrightarrow{V^-}\;C_0;$
		\item upper Hausdorff sense, denoted by $C_n\xrightarrow{H^+}\;C_0,$ if for every $\epsilon >0$, $C_n\subseteq C_0+\epsilon B_X$ eventually;
		\item lower Hausdorff sense, denoted by $C_n\xrightarrow{H^-}\;C_0,$ if for every $\epsilon >0$, $C_0\subseteq C_n+\epsilon B_X$ eventually;	
		\item  Hausdorff sense, denoted by $C_n\xrightarrow{H}\;C_0,$ if $C_n\xrightarrow{H^+}\;C_0$ and $C_n\xrightarrow{H^-}\;C_0$.		
	\end{enumerate}
\end{definition}

\begin{theorem}\label{supcompact}
Let $F \in CB(X)$ be remotal at $x \in X$. Then the following statements are equivalent.
	\begin{enumerate}
		\item $F$ is sup-compact at $x$.
		\item $Q_F(x)$ is compact and $Q_F\left(x,\frac{1}{n}\right)  \xrightarrow{V} Q_F(x).$
		\item $Q_F(x)$ is compact and $Q_F\left(x,\frac{1}{n}\right) \xrightarrow{H} Q_F(x).$
		\item $Q_F(x)$ is compact and $F$ is strongly remotal at $x$.
		\item $\alpha\left(Q_F\left(x,\frac{1}{n}\right)\right)\to 0$.
	\end{enumerate}
\end{theorem}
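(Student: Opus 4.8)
The plan is to prove the five-fold equivalence through the cycle $(1)\Rightarrow(4)\Rightarrow(5)\Rightarrow(3)\Rightarrow(2)\Rightarrow(1)$, after recording two structural facts about the family $\{Q_F(x,1/n)\}$. First, this family is nested and decreasing in $n$, each member is bounded and nonempty (since $Q_F(x)\subseteq Q_F(x,1/n)$ because $F$ is remotal at $x$) and closed (as $F$ is closed and $\|x-\cdot\|$ is continuous), and $\bigcap_n Q_F(x,1/n)=Q_F(x)$. Second, the inclusion $Q_F(x)\subseteq Q_F(x,1/n)$ makes the lower Vietoris and lower Hausdorff convergences of $Q_F(x,1/n)$ to $Q_F(x)$ automatic, so in statements $(2)$ and $(3)$ only the upper convergences carry content.

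With these in hand I would first dispatch the ``soft'' equivalences $(2)\Leftrightarrow(3)\Leftrightarrow(4)$. For $(3)\Leftrightarrow(4)$, the monotonicity of $\delta\mapsto Q_F(x,\delta)$ makes the upper Hausdorff convergence $Q_F(x,1/n)\xrightarrow{H^+}Q_F(x)$ a verbatim restatement of strong remotality: the conditions ``$\forall\epsilon\,\exists N:\ Q_F(x,1/N)\subseteq Q_F(x)+\epsilon B_X$'' and ``$\forall\epsilon\,\exists\delta:\ Q_F(x,\delta)\subseteq Q_F(x)+\epsilon B_X$'' are interchangeable, and both carry the compactness of $Q_F(x)$. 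For $(2)\Leftrightarrow(3)$, I would use that upper Vietoris always implies upper Hausdorff (take the open $\epsilon$-neighbourhood of $Q_F(x)$ as the test open set), while the converse needs compactness of $Q_F(x)$: a compact set has positive distance to the complement of any open set containing it, so an upper Hausdorff bound eventually forces containment in that open set.

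The analytic heart is $(1)\Leftrightarrow(4)$. For $(1)\Rightarrow(4)$: any sequence in $Q_F(x)$ is trivially maximizing, so sup-compactness yields a subsequence converging in $F$ whose limit lies in $Q_F(x)$ by continuity, giving compactness of $Q_F(x)$; and if strong remotality failed I would extract $y_n\in Q_F(x,1/n)$ with $d(y_n,Q_F(x))>\epsilon$, which is maximizing yet by sup-compactness has a subsequence converging to some $y\in Q_F(x)$, a contradiction. For $(4)\Rightarrow(1)$: given a maximizing sequence $(y_n)$, strong remotality forces $d(y_n,Q_F(x))\to 0$, and compactness of $Q_F(x)$ lets me pick nearest points $z_n\in Q_F(x)$ admitting a convergent subsequence $z_{n_j}\to z\in Q_F(x)\subseteq F$, whence $y_{n_j}\to z$ and sup-compactness follows.

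Finally I would close the loop through $(5)$. For $(4)\Rightarrow(5)$: given $\epsilon$, choose $\delta$ with $Q_F(x,\delta)\subseteq Q_F(x)+\epsilon B_X$ and a finite $\epsilon$-net $E$ of the compact set $Q_F(x)$; then $Q_F(x,\delta)\subseteq E+2\epsilon B_X$, so $\alpha\bigl(Q_F(x,1/n)\bigr)\le 2\epsilon$ once $1/n<\delta$, giving $\alpha\bigl(Q_F(x,1/n)\bigr)\to 0$. For $(5)\Rightarrow(3)$: since $\{Q_F(x,1/n)\}$ is a nested sequence of nonempty closed bounded subsets of the complete space $X$ with $\alpha\bigl(Q_F(x,1/n)\bigr)\to 0$, Kuratowski's intersection theorem yields that $\bigcap_n Q_F(x,1/n)=Q_F(x)$ is nonempty and compact and that $Q_F(x,1/n)\xrightarrow{H}Q_F(x)$. (If citing Kuratowski is to be avoided, the same conclusion follows from a direct total-boundedness-and-diagonal argument extracting a convergent subsequence from any maximizing sequence, establishing $(1)$ instead.) I expect this last implication to be the main obstacle, since it is precisely where completeness of $X$ and the exact meaning of $\alpha\to 0$ must be combined, and where one must verify that the limiting intersection is exactly $Q_F(x)$ and not merely nonempty.
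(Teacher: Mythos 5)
Your proof is correct, but it is organized differently from the paper's and is considerably more self-contained. The paper routes everything through statement (2): it proves $(1)\Rightarrow(2)$ directly by the same maximizing-sequence contradiction you use, handles $(3)\Rightarrow(4)\Rightarrow(1)$ essentially as you do (monotonicity of $\delta\mapsto Q_F(x,\delta)$, then extraction of a convergent subsequence of nearby points in the compact set $Q_F(x)$), but then simply cites external results of Shunmugaraj \cite{Chapter} (Theorems 10, 14 and 15 there) for $(2)\Rightarrow(3)$ and for the entire equivalence $(2)\Leftrightarrow(5)$. You instead take (4) as the hub and supply proofs for precisely the steps the paper outsources: your Vietoris--Hausdorff comparison for $(2)\Leftrightarrow(3)$ (the open $\epsilon$-neighbourhood as test open set in one direction; positive distance from the compact $Q_F(x)$ to the complement of an open superset in the other), and your treatment of (5), where the finite-net computation gives $(4)\Rightarrow(5)$ and Kuratowski's intersection theorem --- available in the paper's own reference \cite{Banas} --- gives $(5)\Rightarrow(3)$, including the identification $\bigcap_n Q_F\left(x,\frac{1}{n}\right)=Q_F(x)$ that you rightly flag as the point needing care. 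What the paper's route buys is brevity, at the cost of leaning on cited slice-convergence machinery; what yours buys is a self-contained argument that isolates exactly where compactness of $Q_F(x)$ and completeness of $X$ enter (in $H^+\Rightarrow V^+$ and in $(5)\Rightarrow(3)$ respectively), and your $(4)\Rightarrow(1)$ via nearest points in $Q_F(x)$ is in fact slightly cleaner than the paper's version, which fixes a single $\epsilon$ and extracts approximants $x_n$ without making the diagonal step explicit.
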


\begin{proof}
$(1)\Rightarrow(2)$: Let $(y_n)$ be a sequence in $Q_F(x)$. Then, by assumption, $(y_n)$ has a convergent subsequence. Therefore, $Q_F(x)$ is compact. Since $\left(Q_F\left(x,\frac{1}{n}\right)\right)_{n \in \mathbb{N}}$ is a decreasing sequence of sets and $Q_F(x) = \cap_{n=1}^\infty Q_F\left(x,\frac{1}{n}\right)$, we have $Q_F\left(x,\frac{1}{n}\right) \xrightarrow{ V^-}   Q_F(x)$.

Suppose $Q_F\left(x,\frac{1}{n}\right) \not\xrightarrow{V^+} Q_F(x)$. Then there exist an open set $U$ of $X$ and a sequence $(y_n) \in F$ such that $Q_F(x)\subseteq U$, $y_n\in Q_F\left(x,\frac{1}{n}\right)$ but $y_n\notin U$ for every $n \in \mathbb{N}$. Since $(y_n)$ is a maximizing sequence in $F$ for $x$, by $(1)$, there exists a subsequence $(y_{n_k})$ of $(y_n)$ such that $(y_{n_k})$ converges to $y_0$ for some $y_0\in Q_F(x),$ which is a contradiction. Therefore, $Q_F\left(x,\frac{1}{n}\right)\xrightarrow{V^+}\;Q_F(x)$.

\noindent$(2)\Rightarrow (3)$: This implication follows from \cite[Theorems 10 and 14]{Chapter}.

\noindent$(3)\Rightarrow (4)$:  Let $\epsilon>0$. Then there exists $n_0\in \mathbb{N}$ such that  $ Q_F\left(x,\frac{1}{n}\right)\subseteq Q_F(x)+\epsilon B_X$ for every $n\geq n_0$. Therefore, for $\delta < \frac{1}{n_0}$, we have $Q_F\left(x,\delta \right)\subseteq Q_F(x)+\epsilon B_X.$

\noindent$(4) \Rightarrow (1)$: Let $(y_n)$ be a maximizing sequence in $F$ for $x$ and $\epsilon >0$. For every $n \in \mathbb{N}$, w.l.o.g assume, $y_n\in Q_F\left(x,\frac{1}{n}\right)$. By assumption, there exists $\delta>0$ such that $ Q_F\left(x,\delta \right)\subseteq Q_F(x)+\epsilon B_X.$ Thus, for  every $n\geq\frac{1}{\delta}$, we have $y_n \in Q_F(x)+\epsilon B_X.$  Therefore, for every  $n \in \mathbb{N}$ there exists  $x_n \in Q_F(x)$ such that $\|y_n-x_n\|<\epsilon.$ Since $Q_F(x)$ is compact,  $(x_n)$ has a convergent subsequence, which further implies $(y_n)$ has a convergent subsequence.

\noindent$(2)\Leftrightarrow (5)$: These implications follow from \cite[Theorem 15]{Chapter}.
\end{proof}

\begin{theorem}\label{remseq}
Let $F \in CB(X)$ be remotal at $x \in X$. Then the following statements are equivalent.
	\begin{enumerate}
		\item Every maximizing sequence in $F$ for $x$ converges.
		\item $diam\left(Q_F\left(x,\frac{1}{n}\right)\right)\to 0$.
		\item $Q_F(x)$ is singleton and $Q_F\left(x,\frac{1}{n}\right)  \xrightarrow{V} Q_F(x).$
		\item $Q_F(x)$ is singleton and $Q_F\left(x,\frac{1}{n}\right)  \xrightarrow{H} Q_F(x).$
		\item $F$ is SUR at $x$.
	\end{enumerate}
\end{theorem}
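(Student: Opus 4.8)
The plan is to prove the single cycle $(1)\Rightarrow(3)\Rightarrow(4)\Rightarrow(5)\Rightarrow(2)\Rightarrow(1)$, which runs parallel to the preceding theorem but with ``convergent subsequence'' upgraded to ``convergent'' and ``compact'' to ``singleton'' throughout. For $(1)\Rightarrow(3)$, I would first note that $Q_F(x)$ must be a singleton: if it contained two distinct points $y_1\neq y_2$, then the alternating sequence $y_1,y_2,y_1,y_2,\dots$ is maximizing yet non-convergent, contradicting $(1)$. Lower Vietoris convergence is then automatic, since $(Q_F(x,1/n))$ is decreasing with intersection $Q_F(x)$, exactly as in the previous proof. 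For upper Vietoris convergence I would argue by contradiction: a failure produces an open set $U\supseteq Q_F(x)$ and points $y_n\in Q_F(x,1/n)\setminus U$; being maximizing, $(y_n)$ converges by $(1)$ to some $y_0\in Q_F(x)\subseteq U$, yet $(y_n)$ lies in the closed set $X\setminus U$, forcing $y_0\in X\setminus U$, a contradiction.

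The implication $(3)\Rightarrow(4)$ is immediate from \cite[Theorems 10 and 14]{Chapter}, since a singleton is compact, just as $(2)\Rightarrow(3)$ in \Cref{supcompact}. For $(4)\Rightarrow(5)$, unique remotality is exactly the singleton hypothesis, while the upper Hausdorff half of the convergence yields strong remotality by the argument already used for $(3)\Rightarrow(4)$ there: given $\epsilon>0$, eventually $Q_F(x,1/n)\subseteq Q_F(x)+\epsilon B_X$, so any $\delta<1/n_0$ works. For $(5)\Rightarrow(2)$, writing $Q_F(x)=\{y_0\}$, strong remotality yields a $\delta>0$ with $Q_F(x,\delta)\subseteq\{y_0\}+\epsilon B_X$, whence $diam(Q_F(x,1/n))\leq 2\epsilon$ for all large $n$.

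The one step where completeness genuinely enters, and hence the one to treat with care, is $(2)\Rightarrow(1)$. Given a maximizing sequence $(y_n)$ and $\epsilon>0$, I would choose $m$ with $diam(Q_F(x,1/m))<\epsilon$; since $(y_n)$ is maximizing, all but finitely many terms lie in $Q_F(x,1/m)$, so $(y_n)$ is Cauchy. Completeness of $X$ together with closedness of $F$ then produces a limit in $F$, which must lie in $Q_F(x)$. I do not expect a serious obstacle here: the entire content is in faithfully transporting the subsequence-and-compactness arguments of \Cref{supcompact} to the convergence-and-singleton setting, and in observing that it is precisely the diameter condition combined with completeness that promotes ``has a convergent subsequence'' to ``converges''.
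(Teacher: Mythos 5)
Your proof is correct, but it closes the equivalence along a different cycle than the paper. The paper proves $(1)\Rightarrow(2)\Rightarrow(3)\Rightarrow(4)\Rightarrow(5)\Rightarrow(1)$: it establishes $(1)\Rightarrow(2)$ directly (extracting two sequences from sets of large diameter and using that both must converge to the unique farthest point), quotes the reference for $(2)\Rightarrow(3)\Rightarrow(4)$, and closes with $(4)\Rightarrow(5)\Rightarrow(1)$ in the pattern of \Cref{supcompact}; you instead prove $(1)\Rightarrow(3)$ directly and close with $(5)\Rightarrow(2)\Rightarrow(1)$. Both routes lean on the same two pillars, namely the cited convergence theorems and the arguments of \Cref{supcompact} with ``compact'' upgraded to ``singleton,'' so the difference is largely bookkeeping, with one substantive exception: your final step $(2)\Rightarrow(1)$ invokes completeness of $X$ (Cauchy $\Rightarrow$ convergent), whereas the paper's closing step $(5)\Rightarrow(1)$ needs no completeness at all, since strong remotality plus unique remotality forces every maximizing sequence into an $\epsilon$-ball around the unique farthest point. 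In fact completeness is avoidable in your step too: since $F$ is remotal at $x$ by hypothesis, fix $y_0\in Q_F(x)\subseteq Q_F\left(x,\frac{1}{m}\right)$ for every $m$; any maximizing sequence eventually lies in $Q_F\left(x,\frac{1}{m}\right)$ together with $y_0$, so $\|y_n-y_0\|\leq diam\left(Q_F\left(x,\frac{1}{m}\right)\right)$ eventually, and $(y_n)$ converges to $y_0$ outright. So your remark that completeness ``genuinely enters'' at $(2)\Rightarrow(1)$ is not quite accurate in context: it is the standing remotality hypothesis, not completeness, that anchors the Cauchy sequence, and completeness would matter only if remotality were dropped from the hypotheses. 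Your remaining steps --- the alternating-sequence argument for the singleton in $(1)\Rightarrow(3)$, the upper Vietoris contradiction, $(3)\Rightarrow(4)$ via the compactness of a singleton, $(4)\Rightarrow(5)$, and $(5)\Rightarrow(2)$ --- are all sound and match the mechanisms the paper intends.
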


\begin{proof}
$(1)\Rightarrow (2)$: Clearly $Q_F(x)$ is singleton. Suppose $diam\left(Q_F\left(x,\frac{1}{n}\right)\right) \not\to 0$. Then, there exists $\epsilon>0$ such that $diam\left(Q_F\left(x,\frac{1}{n}\right)\right)>\epsilon$ for every $n\in \mathbb{N}$. Therefore there exist two sequences $(y_n), (y_n') \in F$ such that $y_n, y_n' \in Q_F\left(x,\frac{1}{n}\right)$ and $\|y_n-y_n'\|>\epsilon$ for every $n\in \mathbb{N}$. By $(1)$, both $(y_n)$ and $(y_n')$ converge to $y_0$ and $y_0' $ respectively for some $y_0, y_0' \in F.$ Observe that $y_0, y_0' \in Q_F(x).$ Since,  $Q_F(x)$ is singleton, we have $\|y_n-y_n'\|\to 0$ which is a contradiction. Thus, $diam\left(Q_F\left(x,\frac{1}{n}\right)\right)\to 0.$

\noindent$(2)\Rightarrow (3)\Rightarrow (4)$: These implications follow from \cite[Theorems 10 and 15]{Chapter}.

\noindent$(4)\Rightarrow (5)\Rightarrow(1)$: These implications follow in similar lines to the proof of $(3)\Rightarrow (4)\Rightarrow(1)$ in \Cref{supcompact}.
\end{proof}

The properties given in the following result are easy to verify.

\begin{proposition}\label{Q_F}
Let $F\in CB(X)$ and $x\in X$.
	\begin{enumerate}
		\item $r(F,x)=0\Leftrightarrow F=\{x\}$.
		\item $r(z+F,z+x)=r(F,x)$ for any $z\in X$.
		\item $r(\alpha F,\alpha x)=\vert \alpha \vert r(F,x)$ for any $\alpha \in \mathbb{R}$.
		\item $Q_F(x,\delta_1)\subseteq Q_F(x,\delta_2)$ for every $0\leq\delta_1\leq \delta_2$.
		\item $Q_{z+F}(z+x,\delta)=z+Q_F(x,\delta)$ for every $\delta\geq 0$ and $z\in X$.
		\item $Q_{\alpha F}(\alpha x,\delta)=\alpha Q_{F}\left(x,\frac{\delta}{\vert \alpha \vert}\right)$ for every $\delta\geq 0$ and $\alpha \in \mathbb{R}\setminus \{0\}$.
	\end{enumerate}
\end{proposition}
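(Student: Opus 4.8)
The plan is to verify each item directly from the definitions $r(F,x)=\sup\{\|x-y\|:y\in F\}$ and $Q_F(x,\delta)=\{y\in F:\|x-y\|\ge r(F,x)-\delta\}$, exploiting only the fact that translations are isometries and that the norm is positively homogeneous. I would begin with (1): if $F=\{x\}$ then $r(F,x)=\|x-x\|=0$, while conversely $r(F,x)=0$ forces $\|x-y\|=0$, hence $y=x$, for every $y\in F$, and the non-emptiness of $F$ gives $F=\{x\}$.

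Next I would establish the two radius identities (2) and (3), since they are exactly what is needed to deduce the set identities (5) and (6). For (2), the substitution $w=z+y$ sets up a bijection between $z+F$ and $F$ under which $\|(z+x)-w\|=\|x-y\|$, so the two suprema agree term by term. For (3), the substitution $w=\alpha y$ gives $\|\alpha x-w\|=|\alpha|\,\|x-y\|$, and pulling the nonnegative factor $|\alpha|$ out of the supremum yields the claim; the degenerate case $\alpha=0$ is handled separately, since then $\alpha F=\{0\}=\{\alpha x\}$ and both sides vanish.

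Item (4) is immediate: for $0\le\delta_1\le\delta_2$ one has $r(F,x)-\delta_1\ge r(F,x)-\delta_2$, so the defining inequality for $Q_F(x,\delta_1)$ is the stronger one, giving the inclusion. For (5) and (6) I would combine the two bijections above with the radius identities just proved. In (5), a point $w=z+y$ lies in $Q_{z+F}(z+x,\delta)$ precisely when $\|x-y\|\ge r(z+F,z+x)-\delta=r(F,x)-\delta$ by (2), that is, iff $y\in Q_F(x,\delta)$; hence $Q_{z+F}(z+x,\delta)=z+Q_F(x,\delta)$. In (6), writing $w=\alpha y$, the membership condition reads $|\alpha|\,\|x-y\|\ge|\alpha|\,r(F,x)-\delta$ by (3), and dividing through by $|\alpha|>0$ gives $\|x-y\|\ge r(F,x)-\delta/|\alpha|$, i.e. $y\in Q_F(x,\delta/|\alpha|)$, so $Q_{\alpha F}(\alpha x,\delta)=\alpha Q_F(x,\delta/|\alpha|)$.

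There is no genuine obstacle here; every statement collapses to a one-line computation once the correct change of variable is made. The only point demanding a moment's care is keeping track of the tolerance in (6): scaling the ambient set by $\alpha$ rescales distances by $|\alpha|$, which is precisely why the tolerance on the right-hand side must be $\delta/|\alpha|$ rather than $\delta$.
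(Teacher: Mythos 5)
Your proof is correct, and it is exactly the routine verification the paper has in mind: the paper offers no proof at all, stating only that these properties ``are easy to verify,'' and your change-of-variable arguments (translation isometry for (2),(5), homogeneity of the norm for (3),(6), with the correct rescaled tolerance $\delta/|\alpha|$) are the intended direct check from the definitions. Nothing further is needed.
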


The following result is a consequence of \Cref{Q_F} and easy to verify.  Let $\mathsf{P}$ be any of the property given in \Cref{defn}.

\begin{proposition} Let $A$ be a non-empty subset of $X$ and $F\in CB(X)$. Then the following statements are equivalent.
	\begin{enumerate}
		\item $F$ has property $\mathsf{P}$ on $A$.
		\item $\alpha F$ has property $\mathsf{P}$ on $\alpha A$ for some (hence, for every) $\alpha\in \mathbb{R}\setminus\{0\}$.
		\item $z+F$ has property $\mathsf{P}$ on $z+A$ for some (hence, for every) $z\in X$. 	
	\end{enumerate}
\end{proposition}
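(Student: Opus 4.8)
The plan is to leverage the scaling and translation properties collected in \Cref{Q_F} to transfer each of the six remotality properties from $F$ to its affine images $\alpha F$ and $z+F$. Since the statement is an equivalence among three conditions, I would establish the cyclic chain $(1)\Rightarrow(2)\Rightarrow(1)$ and $(1)\Leftrightarrow(3)$, but in fact the cleanest route is to prove the two invariances independently: first that property $\mathsf{P}$ is preserved under the dilation $F\mapsto \alpha F$ (with $A\mapsto \alpha A$), and then under the translation $F\mapsto z+F$ (with $A\mapsto z+A$). Because both maps are invertible on $X$ (for $\alpha\neq 0$), each invariance is automatically an ``if and only if,'' which is why the phrase ``for some (hence, for every)'' is justified: applying the statement once with $\alpha$ and once with $\alpha^{-1}$ (respectively $z$ and $-z$) yields the full equivalence.

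For the dilation case, the key observation is the identity $Q_{\alpha F}(\alpha x,\delta)=\alpha\, Q_F\!\left(x,\tfrac{\delta}{|\alpha|}\right)$ from \Cref{Q_F}(6), together with $r(\alpha F,\alpha x)=|\alpha|\,r(F,x)$ from \Cref{Q_F}(3). I would run through the defining conditions one at a time. For remotality and unique remotality, the point is simply that $Q_{\alpha F}(\alpha x)=\alpha\, Q_F(x)$, and multiplication by $\alpha\neq 0$ is a bijection preserving nonemptiness and cardinality. For the three ``strong'' properties, the substitution is mechanical: an inclusion $Q_F\!\left(x,\delta\right)\subseteq Q_F(x)+\epsilon B_X$ transforms, under multiplication by $\alpha$ and the identity $\alpha B_X=|\alpha|B_X$, into $Q_{\alpha F}\!\left(\alpha x,|\alpha|\delta\right)\subseteq Q_{\alpha F}(\alpha x)+|\alpha|\epsilon B_X$; so one chooses the transformed tolerance $\delta'=|\alpha|\delta$ against the transformed error $\epsilon'=|\alpha|\epsilon$, and the quantifier structure (pointwise $\delta(\epsilon,x)$ for SUR versus uniform $\delta(\epsilon)$ for USUR) is carried along verbatim since scaling does not mix the points of $A$. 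For sup-compactness, a maximizing sequence $(w_n)$ in $\alpha F$ for $\alpha x$ corresponds via $w_n=\alpha y_n$ to a maximizing sequence $(y_n)$ in $F$ for $x$ (using homogeneity of the radius), and $(y_n)$ having a convergent subsequence is equivalent to $(w_n)$ having one, since $y\mapsto \alpha y$ is a homeomorphism.

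For the translation case the argument is even more direct, because \Cref{Q_F}(2) gives $r(z+F,z+x)=r(F,x)$ and \Cref{Q_F}(5) gives $Q_{z+F}(z+x,\delta)=z+Q_F(x,\delta)$ with \emph{no} rescaling of $\delta$. Thus every inclusion and every sequential condition transfers under the isometric bijection $y\mapsto z+y$ with all tolerances $\delta$ and errors $\epsilon$ left unchanged; in particular the uniform-in-$x$ character of USUR is preserved because translation acts on all points of $A$ by the same vector $z$. Combining the two invariances and noting that a general affine substitution factors as a dilation followed by a translation completes the argument.

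The routine part is the bookkeeping across the six properties, and there is no genuine obstacle since \Cref{Q_F} supplies exactly the needed identities; the only point requiring care is the tolerance rescaling by $|\alpha|$ in the dilation step, where one must verify that this constant factor is harmless for the existential quantifiers defining the strong properties (it is, since $\epsilon\mapsto|\alpha|\epsilon$ is a bijection of $(0,\infty)$) and, crucially, that it does not disturb the \emph{uniformity} over $A$ in the USUR case (it does not, because the rescaling factor $|\alpha|$ depends only on $\alpha$, not on $x$). Given these observations, I would simply remark that each property translates verbatim under \Cref{Q_F} and declare the proof straightforward, as the statement itself indicates.
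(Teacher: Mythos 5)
Your proposal is correct and follows exactly the route the paper intends: the paper gives no written-out proof, stating only that the result ``is a consequence of \Cref{Q_F} and easy to verify,'' and your argument is precisely that verification, using \Cref{Q_F}(2),(3),(5),(6) to transfer each property under dilation (with the harmless $|\alpha|$-rescaling of tolerances) and translation (with no rescaling), plus invertibility for the ``for some (hence, for every)'' clause. Nothing in your write-up deviates from or adds beyond what the paper's one-line justification presupposes.
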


Now, we present some conditions which will lead to the union of two sets possesses the property $\mathsf{P}.$ To see this we need the following lemma.

\begin{lemma}\label{farunion} Let $F_1,F_2\in CB(X)$ and $x\in X$.
	If $r(F_1,x)\neq r(F_2,x)$, then for every $0\leq \delta<\vert r(F_1,x)-r(F_2,x)\vert$ we have \begin{center}
		$Q_{F_1\cup F_2}(x,\delta)=\begin{cases}
			Q_{F_1}(x,\delta), & if\ r(F_1,x)> r(F_2,x);\\
			Q_{F_2}(x,\delta), & if\ r(F_1,x)< r(F_2,x).
		\end{cases}$
\end{center}
	Further, if $r(F_1,x)=r(F_2,x)$, then for every $\eta\geq0$, we have
\begin{center}
		$Q_{F_1\cup F_2}(x,\eta)= Q_{F_1}(x,\eta)\cup Q_{F_2}(x,\eta)$.	
\end{center} 	
\end{lemma}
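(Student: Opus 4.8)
The plan is to reduce the entire statement to one elementary observation: for any $x\in X$ and any non-empty bounded sets $F_1,F_2$, the supremum over the union splits as a maximum, namely $r(F_1\cup F_2,x)=\max\{r(F_1,x),r(F_2,x)\}$. This is immediate from $\sup_{y\in F_1\cup F_2}\|x-y\|=\max\big\{\sup_{y\in F_1}\|x-y\|,\sup_{y\in F_2}\|x-y\|\big\}$. Writing $r_1=r(F_1,x)$, $r_2=r(F_2,x)$ and $R=r(F_1\cup F_2,x)=\max\{r_1,r_2\}$, the whole lemma becomes a matter of comparing the threshold $R-\delta$ (respectively $R-\eta$) against the distances achievable in each piece.

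For the first assertion I would assume $r_1\neq r_2$ and, by symmetry, treat only the case $r_1>r_2$, so that $R=r_1$, and fix $0\leq\delta<r_1-r_2=|r_1-r_2|$. The key point is that this strict inequality is exactly what forces $r_1-\delta>r_2$. Consequently every $y\in F_2$ satisfies $\|x-y\|\leq r_2<r_1-\delta=R-\delta$, so no point of $F_2$ can meet the threshold, and hence none lies in $Q_{F_1\cup F_2}(x,\delta)$. The almost-farthest set of the union therefore consists entirely of points of $F_1$, and since $R=r_1$ the defining inequality $\|x-y\|\geq R-\delta$ is literally the one defining $Q_{F_1}(x,\delta)$; checking the two inclusions then gives $Q_{F_1\cup F_2}(x,\delta)=Q_{F_1}(x,\delta)$. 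The case $r_1<r_2$ is identical after interchanging the roles of $F_1$ and $F_2$.

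For the second assertion I would assume $r_1=r_2=R$ and fix $\eta\geq 0$. Now the threshold $R-\eta$ simultaneously equals $r_1-\eta$ and $r_2-\eta$, so membership is purely a disjunction: a point $y\in F_1\cup F_2$ satisfies $\|x-y\|\geq R-\eta$ if and only if either $y\in F_1$ with $\|x-y\|\geq r_1-\eta$, or $y\in F_2$ with $\|x-y\|\geq r_2-\eta$. Translating this equivalence directly yields $Q_{F_1\cup F_2}(x,\eta)=Q_{F_1}(x,\eta)\cup Q_{F_2}(x,\eta)$.

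The argument is essentially bookkeeping, so I do not expect a serious obstacle; the only place that genuinely requires care is the strict inequality $\delta<|r_1-r_2|$ in the first part. It is precisely this strictness that rules out contributions from the set with the smaller radius: if $\delta$ were allowed to equal $|r_1-r_2|$, a point of $F_2$ attaining its own farthest distance $r_2$ could re-enter $Q_{F_1\cup F_2}(x,\delta)$ and the clean identification with $Q_{F_1}(x,\delta)$ would break. I would therefore single out the step $r_1-\delta>r_2$ as the crux of the first case.
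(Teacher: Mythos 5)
Your proposal is correct and follows essentially the same route as the paper's proof: both rest on the identity $r(F_1\cup F_2,x)=\max\{r(F_1,x),r(F_2,x)\}$, both use the strict inequality $r(F_2,x)<r(F_1,x)-\delta$ to exclude points of $F_2$ from the almost-farthest set of the union (the paper phrases this as an explicit contradiction, you phrase it as a threshold that $F_2$ cannot meet), and both handle the equal-radii case by the same direct disjunction argument. No gaps; nothing further is needed.
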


\begin{proof}
First, we observe that $r(F_1\cup F_2,x)=\max\{r(F_1,x),r(F_2,x)\}$.  Since $r(F_i,x)\leq r(F_1\cup F_2,x)$ for $i=1,2$, we have $\max\{r(F_1,x),r(F_2,x)\}\leq r(F_1\cup F_2,x)$. If $y\in F_1\cup F_2$, then $y\in F_1$ or $y\in F_2$ which leads to $r(F_1\cup F_2,x)\leq \max\{r(F_1,x),r(F_2,x)\}$. Hence the equality holds.

\par	Let $r(F_1,x)> r(F_2,x)$. Then $r(F_1\cup F_2, x)=r(F_1,x)$ and clearly for every $\delta\geq0$ we have $Q_{F_1}(x,\delta)\subseteq Q_{F_1\cup F_2}(x,\delta).$ Let $0\leq\delta<r(F_1,x)-r(F_2,x)$ and $y\in Q_{F_1\cup F_2}(x,\delta)$. Then, $\|y-x\|\geq r(F_1\cup F_2,x)-\delta =r(F_1,x)-\delta$. If $y\in F_1$ it is clear that $y\in Q_{F_1}(x,\delta)$. Suppose $y\in F_2$, then  $ r(F_1,x)-\delta < \|y-x\| \leq r(F_2, x) < r(F_1, x)-\delta$, which is a contradiction. Thus,  $Q_{F_1\cup F_2}(x,\delta)\subseteq Q_{F_1}(x,\delta)$. The other case follows in similar lines.

\par If $r(F_1,x)= r(F_2,x),$ then $Q_{F_1}(x,\eta)\cup Q_{F_2}(x,\eta) \subseteq Q_{F_1\cup F_2}(x,\eta)$ for every $\eta\geq0$. Let $y\in Q_{F_1\cup F_2}(x,\eta)$, then  $\|y-x\|\geq r(F_i,x)-\eta$  if $y\in F_i$ for $i=1,2$. Thus, $y\in Q_{F_1}(x,\eta)\cup Q_{F_2}(x,\eta)$. Hence the equality holds.
\end{proof}

\begin{theorem}\label{union}
Let $A$ be a non-empty subset of $X$ and $F_1,F_2\in CB(X)$. Then the following statements hold.
	\begin{enumerate}
		\item If $r(F_1,x)>r(F_2,x)$ for each $x\in A$ and $F_1$ has property $\mathsf{P}$(except USUR) on $A$, Then $F_1\cup F_2$ has property $\mathsf{P}$(except USUR) on $A$. In addition, if   $  \;\inf\{r(F_1,x)-r(F_2,x): x \in A\} >0$ and $F_1$ is USUR on $A$, then $F_1\cup F_2$ is USUR on $A$.

		\item If $r(F_1,x)=r(F_2,x)$ for each $x\in A$ and $F_1, F_2$ are sup-compact (respectively, strongly remotal, remotal) on $A$, then $F_1\cup F_2$ is sup-compact (respectively, strongly remotal, remotal) on $A$.
	\end{enumerate}
\end{theorem}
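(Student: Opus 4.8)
The plan is to reduce every assertion to the structural identities for almost-farthest-point sets supplied by \Cref{farunion}, so that each remotality property of $F_1\cup F_2$ is read off from the corresponding property of $F_1$ (and $F_2$). For part (1), I would fix $x\in A$ and set $c_x=r(F_1,x)-r(F_2,x)>0$. By \Cref{farunion}, for every $\delta$ with $0\leq\delta<c_x$ one has $Q_{F_1\cup F_2}(x,\delta)=Q_{F_1}(x,\delta)$; taking $\delta=0$ gives $Q_{F_1\cup F_2}(x)=Q_{F_1}(x)$, from which remotality and unique remotality transfer at once.

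For strong remotality at $x$, given $\epsilon>0$ I would take the $\delta_1>0$ furnished by the strong remotality of $F_1$ and choose $\delta=\min\{\delta_1,c_x/2\}$; since $\delta\leq\delta_1$ and $\delta<c_x$, \Cref{Q_F}(4) and \Cref{farunion} give $Q_{F_1\cup F_2}(x,\delta)=Q_{F_1}(x,\delta)\subseteq Q_{F_1}(x,\delta_1)\subseteq Q_{F_1}(x)+\epsilon B_X=Q_{F_1\cup F_2}(x)+\epsilon B_X$, which also settles SUR. For sup-compactness I would argue directly: if $(y_n)$ is maximizing for $F_1\cup F_2$ then $\|x-y_n\|\to r(F_1,x)>r(F_2,x)$, so eventually $\|x-y_n\|>r(F_2,x)$, forcing $y_n\in F_1$; the tail is then maximizing for $F_1$, whose sup-compactness produces a subsequence converging in $F_1\subseteq F_1\cup F_2$ (alternatively, \Cref{supcompact}(5) applies since $\alpha(Q_{F_1\cup F_2}(x,\frac1n))=\alpha(Q_{F_1}(x,\frac1n))\to0$ once $\frac1n<c_x$). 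Finally, for USUR the extra hypothesis $m:=\inf\{r(F_1,x)-r(F_2,x):x\in A\}>0$ is used precisely to make the choice of $\delta$ uniform: given $\epsilon>0$, the USUR of $F_1$ provides a single $\delta_1(\epsilon)$ valid for all $x$, and setting $\delta=\min\{\delta_1(\epsilon),m/2\}$ ensures $\delta<c_x$ for every $x\in A$ simultaneously, so the inclusion holds uniformly.

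For part (2), the equal-radius case of \Cref{farunion} gives $Q_{F_1\cup F_2}(x,\eta)=Q_{F_1}(x,\eta)\cup Q_{F_2}(x,\eta)$ for every $\eta\geq0$. Remotality is immediate since the right-hand side at $\eta=0$ is nonempty. For strong remotality, given $\epsilon>0$ I would pick $\delta_1,\delta_2>0$ from the strong remotality of $F_1,F_2$ and set $\delta=\min\{\delta_1,\delta_2\}$; combining \Cref{Q_F}(4) with the elementary identity $(S_1+\epsilon B_X)\cup(S_2+\epsilon B_X)=(S_1\cup S_2)+\epsilon B_X$ yields $Q_{F_1\cup F_2}(x,\delta)\subseteq Q_{F_1\cup F_2}(x)+\epsilon B_X$. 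For sup-compactness, a maximizing sequence $(y_n)$ for $F_1\cup F_2$ has infinitely many terms in $F_1$ or in $F_2$; the corresponding subsequence lies in, say, $F_1$, and since $\|x-y_{n_k}\|\to r(F_1\cup F_2,x)=r(F_1,x)$ it is maximizing for $F_1$, so sup-compactness of $F_1$ produces a convergent subsequence with limit in $F_1\subseteq F_1\cup F_2$.

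I expect the calculations to be routine once \Cref{farunion} is in hand; the only genuinely delicate point is the uniformity required for USUR in part (1). Without a uniform lower bound on the radius gap, the admissible range $[0,c_x)$ for $\delta$ could shrink to zero as $x$ varies, destroying any hope of a single $\delta(\epsilon)$ valid across all of $A$, and the hypothesis $m>0$ is exactly what rules this out. It is also worth recording why part (2) omits unique remotality and USUR: a union of two distinct singletons $Q_{F_1}(x)\cup Q_{F_2}(x)$ need not be a singleton, so those properties fail to transfer in the equal-radius case in general.
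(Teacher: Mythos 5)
Your proposal is correct and follows essentially the same strategy as the paper: every property is reduced to the identities of \Cref{farunion}, with the $\delta$ for (strong/unique) remotality chosen below the radius gap $r(F_1,x)-r(F_2,x)$ pointwise, and below the uniform gap $\inf\{r(F_1,x)-r(F_2,x):x\in A\}$ for USUR. The cases you work out explicitly (the direct maximizing-sequence argument for sup-compactness and the uniformity for USUR) are precisely the ones the paper dismisses as ``similar,'' so there is no substantive difference in approach.
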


\begin{proof}
$(1)$: Here we prove for the property $SUR$, the proofs for other properties are similar. Let $\epsilon>0$. Since $F_1$ is $SUR$ on $A$, there exists a $\delta_1>0$ such that $Q_{F_1}(x,\delta_1)\subseteq Q_{F_1}(x)+\epsilon B_X$ for every $x\in A$. Choose $0< \delta<\min\{\delta_1,r(F_1,x)-r(F_2,x)\}$. Now, by \Cref{Q_F} and \Cref{farunion}, we have \begin{center}
$Q_{F_1\cup F_2}(x,\delta)=Q_{F_1}(x,\delta)\subseteq Q_{F_1}(x)+\epsilon B_X= Q_{F_1\cup F_2}(x)+\epsilon B_X.$
\end{center}
Thus, $F_1\cup F_2$ is SUR on $A$.

\noindent $(2)$: Let $F_1, F_2$ are strongly remotal on $A$, $\epsilon>0$ and $x \in A$. Thus, there exists a $\delta>0$ such that $Q_{F_i}(x,\delta)\subseteq Q_{F_i}(x)+\epsilon B_X$ for $i=1,2$. Now, by \Cref{farunion}, we have
\begin{align*}
Q_{F_1\cup F_2}(x,\delta)  & =  Q_{F_1}(x,\delta) \cup Q_{F_2}(x,\delta) \\
							   &  \subseteq  (Q_{F_1}(x)\cup Q_{F_2}(x))+\epsilon B_X \\
							  & = Q_{F_1\cup F_2}(x)+\epsilon B_X.
\end{align*}
Thus, $F_1\cup F_2$ is strongly remotal on $A$. In a similar way, the proof for other properties follows.
\end{proof}
In general, the statement $(2)$ of \Cref{union} does not hold for the properties USUR, SUR and unique remotality. However, as a consequence of \Cref{farunion} and \Cref{union} we have the following observation.
\begin{corollary}
Let $A$ be a non-empty subset of $X$ and $F_1,F_2\in CB(X)$ such that $Q_{F_1}(x)=Q_{F_2}(x)$ for each $x\in A$ and $F_1, F_2$ are $USUR$ (respectively, SUR, uniquely remotal) on $A$. Then $F_1\cup F_2$ is $USUR$(respectively, SUR, uniquely remotal) on $A$. 	
\end{corollary}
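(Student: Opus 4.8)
The plan is to reduce everything to the ``equal radii'' branch of \Cref{farunion} and then reuse the $\min\{\delta_1,\delta_2\}$ bookkeeping that proved statement $(2)$ of \Cref{union}. The key preliminary observation is that the hypothesis $Q_{F_1}(x)=Q_{F_2}(x)$ already \emph{forces} $r(F_1,x)=r(F_2,x)$ for each $x\in A$. Indeed, in each of the three cases $F_1$ and $F_2$ are uniquely remotal on $A$, so the common set $Q_{F_1}(x)=Q_{F_2}(x)$ is a nonempty singleton, say $\{p\}$; since $p$ is simultaneously a farthest point of $F_1$ and of $F_2$ from $x$, we obtain $r(F_1,x)=\|x-p\|=r(F_2,x)$. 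This is the one genuinely substantive step, and it is precisely what rules out the pathology flagged in the remark preceding the corollary.

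Granting this, I would invoke the second part of \Cref{farunion}, which now applies: for every $\eta\geq 0$,
\[
Q_{F_1\cup F_2}(x,\eta)=Q_{F_1}(x,\eta)\cup Q_{F_2}(x,\eta).
\]
Taking $\eta=0$ yields $Q_{F_1\cup F_2}(x)=Q_{F_1}(x)\cup Q_{F_2}(x)=Q_{F_1}(x)$, a singleton; hence $F_1\cup F_2$ is uniquely remotal on $A$. This already settles the unique-remotality case entirely.

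For the SUR case, fix $x\in A$ and $\epsilon>0$. By strong remotality of $F_1$ and $F_2$ there are $\delta_1,\delta_2>0$ with $Q_{F_i}(x,\delta_i)\subseteq Q_{F_i}(x)+\epsilon B_X$ for $i=1,2$. Setting $\delta=\min\{\delta_1,\delta_2\}$ and using the monotonicity of \Cref{Q_F}$(4)$ together with the displayed union identity,
\[
Q_{F_1\cup F_2}(x,\delta)=Q_{F_1}(x,\delta)\cup Q_{F_2}(x,\delta)\subseteq\bigl(Q_{F_1}(x)\cup Q_{F_2}(x)\bigr)+\epsilon B_X=Q_{F_1\cup F_2}(x)+\epsilon B_X,
\]
so $F_1\cup F_2$ is strongly remotal, and combined with unique remotality it is SUR on $A$. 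The USUR case is verbatim the same, the only difference being that the USUR hypotheses on $F_1,F_2$ furnish $\delta_1,\delta_2$ depending on $\epsilon$ alone; hence $\delta=\min\{\delta_1,\delta_2\}$ is independent of $x$ and the inclusion above holds uniformly over all $x\in A$.

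I do not anticipate a real obstacle here: once the radii are shown to coincide, the argument is a routine adaptation of the strongly-remotal half of \Cref{union}, differing only in that the common farthest-point set lets me collapse $Q_{F_1}(x)\cup Q_{F_2}(x)$ back to $Q_{F_1\cup F_2}(x)$ while preserving uniqueness. The whole content of the corollary is therefore the interplay between the equality $Q_{F_1}(x)=Q_{F_2}(x)$ and the equal-radii clause of \Cref{farunion}; everything else is elementary set inclusion.
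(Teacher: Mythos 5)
Your proposal is correct and follows essentially the route the paper intends: the paper states this corollary as a consequence of \Cref{farunion} and \Cref{union}, and your argument fills in precisely those details --- deducing $r(F_1,x)=r(F_2,x)$ from the common (nonempty) farthest-point set, applying the equal-radii branch of \Cref{farunion} to collapse $Q_{F_1\cup F_2}(x,\eta)$ into $Q_{F_1}(x,\eta)\cup Q_{F_2}(x,\eta)$, and reusing the $\min\{\delta_1,\delta_2\}$ bookkeeping from the proof of \Cref{union}. No gaps; the uniqueness is preserved exactly because $Q_{F_1}(x)\cup Q_{F_2}(x)=Q_{F_1}(x)$ under the hypothesis.
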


The following two results are essential to obtain characterizations for certain rotundity properties in the subsequent section. For any $A\in CL(X)$ and $\delta\geq 0$, the set of nearly best approximants from $A$ to $x$ is defined as $P_A(x,\delta)=\{y\in A: \|x-y\|\leq\inf_{z\in A}\|x-z\|+\delta\}$. We write $P_A(x,0)=P_A(x)$.

\begin{proposition}\label{farclose}
Let $x\in X$ and $\delta \in [0,2]$. Then the following statements hold.
	\begin{enumerate}
		\item $P_{B_X}(x,\delta)\subseteq Q_{B_X}(-x,\delta),$ whenever $\|x\|\geq 1.$		
		\item $P_{S_X}(x,\delta)\subseteq Q_{S_X}(-x,\delta).$
		\item $Q_{S_{X}}(x)=Q_{B_{X}}(x).$
	\end{enumerate}
\end{proposition}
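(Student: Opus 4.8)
The plan is to reduce every statement to elementary triangle-inequality estimates after first pinning down the relevant radii and distances. The starting point is the farthest-distance function: for every $x\in X$ one has $r(B_X,x)=r(S_X,x)=\|x\|+1$, the upper bound coming from $\|x-y\|\le\|x\|+\|y\|\le\|x\|+1$ and the value being attained at $y=-x/\|x\|$ (and trivially when $x=0$). Similarly, writing $d(x,A)=\inf_{z\in A}\|x-z\|$, one checks $d(x,B_X)=\|x\|-1$ when $\|x\|\ge 1$ and $d(x,S_X)=\bigl|\|x\|-1\bigr|$, both realized at $x/\|x\|$. With these in hand, membership in the nearly-best-approximation sets and in the almost-farthest-point sets becomes a pair of explicit norm inequalities, which is what I would exploit. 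Note also that $Q_F(x,\delta)$ is characterized by the single inequality $\|x-y\|\ge r(F,x)-\delta$, and that $r(B_X,-x)=r(S_X,-x)=\|x\|+1$, so membership in $Q_{B_X}(-x,\delta)$ or $Q_{S_X}(-x,\delta)$ amounts to $\|x+y\|\ge\|x\|+1-\delta$.

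The engine for (1) and (2) is that $x+y$ can be estimated from below in two ways: from $2x=(x+y)+(x-y)$ I would get $\|x+y\|\ge 2\|x\|-\|x-y\|$, and from $2y=(x+y)-(x-y)$ I would get $\|x+y\|\ge 2\|y\|-\|x-y\|$. For (1), where $\|x\|\ge 1$, a point $y\in P_{B_X}(x,\delta)$ satisfies $\|x-y\|\le d(x,B_X)+\delta=\|x\|-1+\delta$; feeding this into the first estimate gives $\|x+y\|\ge 2\|x\|-(\|x\|-1+\delta)=\|x\|+1-\delta=r(B_X,-x)-\delta$, which is exactly the condition $y\in Q_{B_X}(-x,\delta)$.

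For (2) the same computation works verbatim when $\|x\|\ge 1$, and this is where I expect the only real subtlety. When $\|x\|<1$ one has $d(x,S_X)=1-\|x\|$, so $y\in P_{S_X}(x,\delta)$ only yields $\|x-y\|\le 1-\|x\|+\delta$, and the first estimate $2\|x\|-\|x-y\|$ is then too weak to reach $\|x\|+1-\delta$. The fix is to switch to the second estimate and invoke the unit-norm constraint $\|y\|=1$: $\|x+y\|\ge 2\|y\|-\|x-y\|=2-\|x-y\|\ge 2-(1-\|x\|+\delta)=\|x\|+1-\delta=r(S_X,-x)-\delta$, so $y\in Q_{S_X}(-x,\delta)$. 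Thus the only care needed is to split on whether $\|x\|\ge 1$ and to choose the triangle decomposition that sees the right constraint; this is the step I would flag as the crux.

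Finally, for (3) I would use $r(S_X,x)=r(B_X,x)=\|x\|+1$ together with $S_X\subseteq B_X$. Any $y\in S_X$ with $\|x-y\|=\|x\|+1$ lies in $B_X$ and realizes the same radius, giving $Q_{S_X}(x)\subseteq Q_{B_X}(x)$. Conversely, if $y\in Q_{B_X}(x)$ then $\|y\|\le 1$ and $\|x-y\|=\|x\|+1$; but $\|x-y\|\le\|x\|+\|y\|$ forces $\|y\|=1$, so $y\in S_X$ and hence $y\in Q_{S_X}(x)$, closing the argument. No serious obstacle arises here beyond recording that farthest points of $B_X$ are automatically pushed onto $S_X$.
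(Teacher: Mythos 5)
Your proposal is correct and follows essentially the same route as the paper: the identical decompositions $\|x+y\|\geq 2\|x\|-\|x-y\|$ for (1), the switch to $\|x+y\|\geq 2\|y\|-\|x-y\|$ in the case $\|x\|<1$ of (2), and the norm-forcing argument $1+\|x\|=\|x-y\|\leq\|x\|+\|y\|$ pushing farthest points of $B_X$ onto $S_X$ for (3). No gaps; the case split you flag as the crux is exactly the one the paper makes.
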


\begin{proof}	
$(1)$: Let $\|x\|\geq1$ and $y\in P_{B_X}(x,\delta).$ Since
	$$\|x+y\|=\|x+y+x-x\| \geq 2\|x\|-\|x-y\| \geq 1+\|x\|-\delta,$$	
	we have $y\in Q_{B_X}(-x,\delta).$

\noindent$(2)$: Let $\|x\|\geq1$. Since $P_{S_X}(z,\delta)=P_{B_X}(z,\delta)\cap S_X$ for any $z \in X$ and $\delta \geq 0$, the proof follows similar to the proof of $(1)$. Let $\|x\|<1$ and $y\in P_{S_X}(x,\delta).$ Since
	$$ \|x+y\|=\|x-y+2y\|	\geq 2\|y\|-\|x-y\|	\geq 1+\|x\|-\delta.$$	Thus, we have $y\in Q_{S_X}(-x,\delta)$.

\noindent$(3)$: It is clear that $Q_{S_X}(x)\subseteq Q_{B_X}(x)$ for any $x \in X.$ Let $y \in Q_{B_X}(x).$ Since, $1+\|x\| = \|x-y\| \leq  \|x\| + \|y\| \leq \|x\|+1,$ we have $y \in S_X.$ Thus, $Q_{S_{X}}(x)=Q_{B_{X}}(x).$
\end{proof}

In general, the containments in the preceding proposition are proper in any rotund space whenever $\delta >0$.

\begin{lemma}\label{cont}
For any $x\in S_X$ and $\delta \geq 0$, the following statements hold.
	\begin{enumerate}
		\item $Q_{B_{X}}(t_2x,\delta)\subseteq Q_{B_{X}}(t_1x,\delta)\subseteq Q_{B_{X}}\left(t_2x,\frac{t_2}{t_1}\delta\right)$, whenever $0<t_1<t_2.$
		\item $Q_{S_{X}}(t_2x,\delta)\subseteq Q_{S_{X}}(t_1x,\delta)\subseteq Q_{S_{X}}\left(t_2x,\frac{t_2}{t_1}\delta\right)$, whenever $0<t_1<t_2.$
	\end{enumerate}
\end{lemma}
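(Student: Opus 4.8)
The plan is to reduce both parts to the behaviour of a single real-valued \emph{defect function}. First I would record that $r(B_X,tx)=r(S_X,tx)=t+1$ for every $t>0$: the triangle inequality gives $\|tx-y\|\leq t\|x\|+\|y\|\leq t+1$ for all $y\in B_X$ (here I use $\|x\|=1$), and the value $t+1$ is attained at $y=-x\in S_X$, so the supremum equals $t+1$ over both $B_X$ and $S_X$ (the equality for $S_X$ is also a consequence of \Cref{farclose}(3)). Set $d_y(t)=t+1-\|tx-y\|$. Then membership translates cleanly: for $\delta\geq 0$ one has $y\in Q_{B_X}(tx,\delta)$ (resp. $y\in Q_{S_X}(tx,\delta)$) if and only if $y$ lies in the relevant set and $d_y(t)\leq\delta$. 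Thus the lemma becomes a statement about how $d_y(t)$ varies in $t$ for a fixed $y$, and the arguments for $B_X$ and $S_X$ are identical apart from the value $d_y(0)=1-\|y\|$, which is $\geq 0$ in both cases.

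For the first containment $Q_{B_X}(t_2x,\delta)\subseteq Q_{B_X}(t_1x,\delta)$ I would show $d_y$ is non-decreasing. The triangle inequality gives $\|t_2x-y\|\leq\|t_1x-y\|+(t_2-t_1)\|x\|=\|t_1x-y\|+(t_2-t_1)$, hence
$$d_y(t_2)=(t_2+1)-\|t_2x-y\|\geq(t_1+1)-\|t_1x-y\|=d_y(t_1).$$
Consequently $d_y(t_2)\leq\delta$ forces $d_y(t_1)\leq\delta$, which is precisely the first inclusion; the same computation applies verbatim for $S_X$ since it only uses that $y$ is fixed.

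For the second containment I would exploit concavity. As $t\mapsto\|tx-y\|$ is convex and $t\mapsto t+1$ is affine, $d_y$ is concave on $[0,\infty)$, with $d_y(0)=1-\|y\|\geq 0$. Writing $t_1=\frac{t_1}{t_2}\,t_2+\left(1-\frac{t_1}{t_2}\right)0$ and applying concavity together with $d_y(0)\geq 0$,
$$d_y(t_1)\geq\frac{t_1}{t_2}\,d_y(t_2)+\left(1-\frac{t_1}{t_2}\right)d_y(0)\geq\frac{t_1}{t_2}\,d_y(t_2),$$
so $d_y(t_2)\leq\frac{t_2}{t_1}\,d_y(t_1)$. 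Hence $d_y(t_1)\leq\delta$ yields $d_y(t_2)\leq\frac{t_2}{t_1}\delta$, giving $Q_{B_X}(t_1x,\delta)\subseteq Q_{B_X}\!\left(t_2x,\frac{t_2}{t_1}\delta\right)$, and the identical argument handles $S_X$. The only point requiring care is the sign condition $d_y(0)\geq 0$ that drives the scaling inequality; beyond that the proof rests solely on the triangle inequality and convexity of the norm, so I do not anticipate a genuine obstacle.
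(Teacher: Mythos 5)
Your proof is correct and is, at its core, the paper's own argument in different packaging: your first inclusion is the same triangle-inequality computation, and your concavity step $d_z(t_1)\geq \tfrac{t_1}{t_2}d_z(t_2)+\bigl(1-\tfrac{t_1}{t_2}\bigr)d_z(0)$ at the convex combination $t_1=\tfrac{t_1}{t_2}t_2+\bigl(1-\tfrac{t_1}{t_2}\bigr)\cdot 0$ unwinds, via the homogeneity identity $\bigl\|t_1x-\tfrac{t_1}{t_2}z\bigr\|=\tfrac{t_1}{t_2}\|t_2x-z\|$, into exactly the paper's estimate $\|z-t_1x\|\leq \bigl\|z-\tfrac{t_1}{t_2}z\bigr\|+\bigl\|\tfrac{t_1}{t_2}z-t_1x\bigr\|$ together with $\|z\|\leq 1$ (your condition $d_z(0)\geq 0$). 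The only cosmetic difference is that you run $B_X$ and $S_X$ in parallel through the defect function, while the paper reduces statement (2) to statement (1) using $Q_{S_X}(w,\eta)=Q_{B_X}(w,\eta)\cap S_X$; both are fine.
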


\begin{proof}
Since $Q_{S_X}(w, \eta) = Q_{B_X}(w, \eta) \cap S_X$ for any $w \in X$ and $\eta \geq 0,$ it is enough to prove the statement $(1)$. Let $0<t_1<t_2$ and $y\in Q_{B_{X}}(t_2x,\delta)$. Since $$t_2+1-\delta\leq \|y-t_2x\|\leq\|y-t_1x\|+\|t_1x-t_2x\|\leq t_2+1,$$
	we have $t_1+1-\delta\leq\|y-t_1x\|\leq t_1+1,$ i.e. $y\in Q_{B_{X}}(t_1x,\delta).$ To see the other inequality let $z\in Q_{B_{X}}(t_1x,\delta).$ Since
\begin{align*}
t_1+1-\delta \leq \left\|z-t_1x\right\| & \leq \left\|z-\frac{t_1}{t_2}z\right\| + \left\|\frac{t_1}{t_2}z-t_1x\right\| \\
	& \leq \frac{t_2-t_1}{t_2}+\frac{t_1}{t_2}+t_1=1+t_1,
\end{align*}
we have $\frac{t_1}{t_2}+t_1-\delta\leq\|\frac{t_1}{t_2}z-t_1x\|\leq \frac{t_1}{t_2}+t_1.$ Thus, $1+t_2-\frac{t_2}{t_1}\delta\leq\|z-t_2x\|$ i.e. $z\in Q_{B_{X}}\left(t_2x,\frac{t_2}{t_1}\delta\right).$
\end{proof}

The following result is a direct consequence of \Cref{cont}.

\begin{corollary}\label{CAP}
For any $\alpha,\beta>0$ and $x \in S_X$ the following statements hold.
	\begin{enumerate}
		\item $(y_n)$ is a maximizing sequence in $B_X$(respectively, $S_X$) for $\alpha x$ if and only if $(y_n)$ is a maximizing sequence in $B_X$(respectively, $S_X$) for $\beta x$.
		\item  $B_X$ has property $\mathsf{P}$ on $\alpha S_X$ if and only if $B_X$ has property $\mathsf{P}$ on $\beta S_X$.
		\item  $S_X$ has property $\mathsf{P}$ on $\alpha S_X$ if and only if $S_X$ has property $\mathsf{P}$ on $\beta S_X$.
	\end{enumerate}
\end{corollary}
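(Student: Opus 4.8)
The plan is to derive everything from the two inclusions in \Cref{cont} together with their degenerate case $\delta=0$. Throughout I may assume $\alpha<\beta$, since the case $\alpha=\beta$ is trivial and the roles of $\alpha,\beta$ are symmetric; I will write the argument for $B_X$, the one for $S_X$ being identical via part $(2)$ of \Cref{cont}. First I would record the key special case: taking $\delta=0$ and $(t_1,t_2)=(\alpha,\beta)$ in \Cref{cont} gives $Q_{B_X}(\beta x)\subseteq Q_{B_X}(\alpha x)\subseteq Q_{B_X}(\beta x)$, hence $Q_{B_X}(\alpha x)=Q_{B_X}(\beta x)$ for every $x\in S_X$. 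This single identity settles remotality and unique remotality at once, since both depend only on the set of farthest points.

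For statement $(1)$ I would use the reformulation that a sequence $(y_n)\subseteq B_X$ is maximizing for a point $w$ exactly when, for every $\delta>0$, one has $y_n\in Q_{B_X}(w,\delta)$ eventually. If $(y_n)$ is maximizing for $\alpha x$, then given $\delta>0$ the inclusion $Q_{B_X}(\alpha x,\tfrac{\alpha}{\beta}\delta)\subseteq Q_{B_X}(\beta x,\delta)$ (the second inclusion of \Cref{cont} applied to $\tfrac{\alpha}{\beta}\delta$) shows $y_n\in Q_{B_X}(\beta x,\delta)$ eventually, so $(y_n)$ is maximizing for $\beta x$; conversely the first inclusion $Q_{B_X}(\beta x,\delta)\subseteq Q_{B_X}(\alpha x,\delta)$ gives the reverse implication. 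The sup-compactness parts of $(2)$ and $(3)$ then follow immediately, since sup-compactness at a point is phrased purely in terms of its maximizing sequences and each point of $\beta S_X$ is $\beta x$ for some $x\in S_X$.

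The remaining properties are handled by transporting the $\epsilon$--$\delta$ condition through \Cref{cont} while using the identity $Q_{B_X}(\alpha x)=Q_{B_X}(\beta x)$ to keep the limiting sets aligned. For strong remotality at $\alpha x$, given $\epsilon>0$ pick $\delta$ with $Q_{B_X}(\alpha x,\delta)\subseteq Q_{B_X}(\alpha x)+\epsilon B_X$; then $Q_{B_X}(\beta x,\delta)\subseteq Q_{B_X}(\alpha x,\delta)\subseteq Q_{B_X}(\beta x)+\epsilon B_X$, so strong remotality passes from $\alpha x$ to $\beta x$ with the same $\delta$, while for the reverse direction one instead feeds $\tfrac{\alpha}{\beta}\delta$ into the second inclusion of \Cref{cont}. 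Since SUR is just unique remotality together with strong remotality, it transfers as well. For USUR one repeats the strong-remotality computation, but now starting from a $\delta$ that is uniform in $x$; the point to verify is that the rescaling $\delta\mapsto\tfrac{\alpha}{\beta}\delta$ used above is a constant independent of the base point $x\in S_X$, so a uniform $\delta$ for $\alpha S_X$ yields a uniform $\delta$ for $\beta S_X$ and conversely.

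I expect the only real care to be bookkeeping rather than any genuine obstacle, in keeping with the corollary being a direct consequence of \Cref{cont}: one must keep track of which of the two inclusions is in force for each direction of each ``if and only if'', and must confirm for USUR that the factor $\tfrac{\alpha}{\beta}$ (equivalently $\tfrac{\beta}{\alpha}$) does not depend on $x$, so that uniformity over the sphere is genuinely preserved. Everything else reduces to direct substitution into \Cref{cont}.
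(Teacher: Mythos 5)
Your proposal is correct and takes essentially the same route as the paper: the paper states this corollary as a direct consequence of \Cref{cont} without spelling out details, and your argument is exactly that derivation made explicit --- the $\delta=0$ case giving $Q_{B_X}(\alpha x)=Q_{B_X}(\beta x)$, the reformulation of maximizing sequences via the sets $Q_{B_X}(\cdot,\delta)$, and the two inclusions of \Cref{cont} (with the $x$-independent rescaling factor $\tfrac{\alpha}{\beta}$, which is indeed what preserves uniformity for USUR) handling the remaining properties.
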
	

\begin{proposition}\label{BallSphereEqui}
Let $A$ be a non-empty subset of $X.$ Then, $B_X$ has property $\mathsf{P}$ on $A$ if and only if $S_X$ has property $\mathsf{P}$ on $A.$
\end{proposition}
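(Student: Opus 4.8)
The plan is to reduce the whole statement to three set-relations between the almost-farthest-point sets of $B_X$ and $S_X$, and then transfer each of the six properties in \Cref{defn} through these relations. First I would record that $r(B_X,x)=r(S_X,x)=\|x\|+1$ for every $x\in X$; indeed $\|x-y\|\le\|x\|+1$ for all $y\in B_X$, and equality is attained at $y=-x/\|x\|\in S_X$ (at any $y\in S_X$ when $x=0$). With the radii equal, the relations I want are: (a) $Q_{S_X}(x)=Q_{B_X}(x)$, which is exactly \Cref{farclose}(3); (b) $Q_{S_X}(x,\delta)\subseteq Q_{B_X}(x,\delta)$, immediate from $S_X\subseteq B_X$ together with the equality of radii; and (c) a reverse containment $Q_{B_X}(x,\delta)\subseteq Q_{S_X}(x,2\delta)+\delta B_X$, valid for $0\le\delta<1$.

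The main work is establishing (c). Fix $0\le\delta<1$ and $y\in Q_{B_X}(x,\delta)$. From $\|x\|+1-\delta\le\|x-y\|\le\|x\|+\|y\|$ I get $\|y\|\ge 1-\delta>0$, so the radial retraction $\bar y:=y/\|y\|\in S_X$ is well defined and $\|y-\bar y\|=1-\|y\|\le\delta$. The triangle inequality then gives $\|x-\bar y\|\ge\|x-y\|-\|y-\bar y\|\ge(\|x\|+1-\delta)-\delta=\|x\|+1-2\delta$, so $\bar y\in Q_{S_X}(x,2\delta)$ and hence $y\in\bar y+\delta B_X\subseteq Q_{S_X}(x,2\delta)+\delta B_X$. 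Crucially the constants $2$ and $\delta$ in (c) are independent of $x$, which is what will let the uniform property USUR transfer as well.

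With (a)--(c) in hand I would transfer the properties one family at a time. Remotality and unique remotality depend only on $Q_{\,\cdot\,}(x)$, so they transfer verbatim from (a). For strong remotality at a point $x$: if $B_X$ has it, then given $\epsilon>0$ pick $\delta$ with $Q_{B_X}(x,\delta)\subseteq Q_{B_X}(x)+\epsilon B_X$ and use (b) and (a) to get $Q_{S_X}(x,\delta)\subseteq Q_{B_X}(x,\delta)\subseteq Q_{S_X}(x)+\epsilon B_X$; conversely, if $S_X$ has it, given $\epsilon>0$ choose $\delta_1$ with $Q_{S_X}(x,\delta_1)\subseteq Q_{S_X}(x)+\tfrac{\epsilon}{2}B_X$ and set $\delta=\min\{\delta_1/2,\epsilon/2\}$, so that (c) and (a) yield $Q_{B_X}(x,\delta)\subseteq Q_{S_X}(x,2\delta)+\delta B_X\subseteq Q_{B_X}(x)+\epsilon B_X$. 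Since SUR is strong remotality together with unique remotality, it transfers too; and since every constant above depends only on $\epsilon$ and not on $x$, the identical computation carried out with the uniform $\delta(\epsilon)$ transfers USUR. For sup-compactness I would invoke the characterization $\alpha\!\left(Q_F(x,1/n)\right)\to 0$ from \Cref{supcompact}(5), which applies since both $B_X$ and $S_X$ are remotal at every point: monotonicity of $\alpha$ with (b) gives $\alpha\!\left(Q_{S_X}(x,1/n)\right)\le\alpha\!\left(Q_{B_X}(x,1/n)\right)$, while (c) with the subadditivity and scaling of $\alpha$ gives $\alpha\!\left(Q_{B_X}(x,1/n)\right)\le\alpha\!\left(Q_{S_X}(x,2/n)\right)+2/n$, so the two quantities vanish together.

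The hard part is genuinely relation (c): once the almost-farthest points of the ball are shown to lie within $\delta$ of the sphere and to retract onto almost-farthest points of the sphere with only a controlled loss ($2\delta$) in the tolerance, everything else is bookkeeping. I would verify the degenerate cases ($x=0$, and $y=0$ which forces $\delta\ge 1$) separately, but they are harmless because in every application above $\delta$ is taken small.
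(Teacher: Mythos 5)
Your proposal is correct and follows essentially the same route as the paper: the identity $Q_{S_X}(x)=Q_{B_X}(x)$ from \Cref{farclose}, the inclusion $Q_{S_X}(x,\delta)\subseteq Q_{B_X}(x,\delta)$, and the radial retraction $y\mapsto y/\|y\|$ applied to almost farthest points of $B_X$ (your relation (c)) are exactly the ingredients of the paper's own proof, which applies them property by property rather than isolating them as three set relations up front. Your only real deviations are organizational: stating (c) with constants independent of $x$ makes the USUR transfer explicit (the paper dispatches it as ``similar'' to the strong remotality case), and your use of the measure-of-noncompactness criterion in \Cref{supcompact} for sup-compactness is an alternative that the paper itself notes immediately after its sequential argument.
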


\begin{proof}
Let $x \in A.$ The property remotality is obvious and unique remotality follows from \Cref{farclose}.

Let $B_X$ be strongly remotal at $x$ and $\epsilon >0.$ Then, there exists $\delta >0$ such that $Q_{B_X}(x, \delta) \subseteq Q_{B_X}(x)+\epsilon B_X.$ Since, $Q_{S_X}(x, \delta) = S_X \cap Q_{B_X}(x, \delta)$ for every $\delta >0$ and $Q_{S_X}(x) = Q_{B_X}(x),$ we have $Q_{S_X}(x, \delta) \subseteq Q_{S_X}(x)+\epsilon B_X.$ i.e $S_X$ is strongly remotal at $x.$ To see the converse, assume $S_X$ is strongly remotal at $x$ and $\epsilon >0.$ Then, there exists $0< \delta <\frac{\epsilon}{2}$ such that $Q_{S_X}(x, \delta) \subseteq Q_{S_X}(x)+\frac{\epsilon}{2} B_X.$ Now, we claim that $Q_{B_X}\left(x, \frac{\delta}{2}\right) \subseteq Q_{B_X}(x)+\epsilon B_X.$ Let $ y \in Q_{B_X}\left(x,\frac{\delta}{2}\right).$ Note that $\|y\| \geq 1-\frac{\delta}{2}.$ Since,
$$ \left\Vert \frac{y}{\|y\|}  -x\right\Vert \geq \left\vert \|x-y\| -  \left\Vert \frac{y}{\|y\|}-y \right\Vert \right\vert  \geq \|x\|+\|y\|-\delta \geq 1+\|x\|-\delta,$$
we have $\frac{y}{\|y\|} \in Q_{S_X}(x, \delta).$ Thus, by assumption, there exists $z \in Q_{S_X}(x)$ such that $\left\Vert \frac{y}{\|y\|}-z\right\Vert \leq \frac{\epsilon}{2}.$ Further, we have
$$\|y-z\| \leq \left\Vert y- \frac{y}{\|y\|}\right\Vert + \left\Vert \frac{y}{\|y\|}-z\right\Vert \leq 1-\|y\|+\frac{\epsilon}{2} < \epsilon.$$ Hence, by \Cref{farclose}, we have $y \in Q_{B_X}(x)+\epsilon B_X.$ i.e.  $B_X$ is strongly remotal at $x.$

Let $B_X$ be sup-compact at $x$ and $(x_n)$  be a maximizing sequence in $S_X$ for $x.$ It is clear that $(x_n)$  be a maximizing sequence in $B_X$ for $x.$ Thus, $(x_n)$ has a convergent subsequence. Therefore, $S_X$ is sup-compact at $x$. To see the converse, let $(y_n)$ be a maximizing sequence in $B_X$ for $x$. Observe that $\|y_n\| \to 1.$ Since for every $n \in \mathbb{N},$
$$\left\vert \left\Vert\frac{y_n}{\|y_n\|}-y_n\right\Vert - \|y_n-x\|\right\vert \leq  \left\Vert\frac{y_n}{\|y_n\|}-x\right\Vert \leq 1+\|x\|,$$
we have $\left(\frac{y_n}{\|y_n\|}\right)$  is a maximizing sequence in $S_X$ for $x.$ Therefore,  $\left(\frac{y_n}{\|y_n\|}\right)$ has a convergent subsequence, which further leads to $(y_n)$ has a convergent subsequence. Hence, $B_X$ is sup-compact at $x$.

The equivalence of the sup-compactness also follows from the equivalence of strong remotality and using \Cref{supcompact} and \Cref{farclose}. The proof of property SUR and USUR are similar to the proofs of sup-compactness and strong remotality respectively.
\end{proof}
Hence, from the preceding two results, we have $B_X$ has property $\mathsf{P}$ on $\alpha S_X$ if and only if $S_X$ has property $\mathsf{P}$ on $\beta S_X$ for any $\alpha, \beta >0.$

We need the following generalized diameter notion to obtain various characterizations for rotundity properties in the next section.  The generalized diameter of a pair of sets $A, B\in CB(X)$ is defined as $r(A,B)=\sup \{\|x-y\|: x\in A,\ y\in B\}.$ The map $r(\cdot,\cdot)$ satisfies the following list of properties which are immediate from its definition.

\begin{proposition}\label{GD-properties} Let $A,B\in CB(X)$. Then we have
	\begin{enumerate}
		\item $r(A,B)=0\Leftrightarrow A=B=\{z\}$ for some $z\in X$.
		\item $r(x+A,x+B)=r(A,B)$ for every $x\in X$. (translation invariance)
		\item $r(kA,kB)=|k|r(A,B)$ for every $k\in \mathbb{R}$. (positive homogeneity)
		\item $r(A,B)=r(B,A)$. (symmetry)
		\item $r(A,B)\leq r(A,C)+r(C,B)$ for any $C\in CB(X)$. (triangular inequality)
		\item $r(A,B)\leq r(C,B)$ whenever $A\subseteq C$ and $C\in CB(X)$. (monotonicity)
		\item $r(\overline{co}(A),\overline{co}(B))=r(A,B)$. (invariance under closed convex hull)
		\item $diam(A)\leq r(A,B)\leq diam(B)$ whenever $A\subseteq B$.
		\item $diam(A\cup B)=\max \{ diam(A), diam(B), r(A,B)\}.$
	\end{enumerate}
\end{proposition}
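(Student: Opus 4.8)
The plan is to verify all nine properties directly from the definition $r(A,B)=\sup\{\|a-b\|: a\in A,\ b\in B\}$, disposing of the algebraic items in bulk and isolating the one genuinely non-formal step, which is (7).

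First I would handle items (1)--(6), each a one-line manipulation of the supremum. For (1), if $r(A,B)=0$ then $\|a-b\|=0$, hence $a=b$, for every $a\in A$ and $b\in B$; since $A,B$ are non-empty this forces both to be the same singleton, and the converse is immediate. For (2) and (3) I would substitute $a\mapsto x+a,\ b\mapsto x+b$ (respectively $a\mapsto ka,\ b\mapsto kb$) inside the norm and pull $|k|$ out of the supremum. Symmetry (4) is just $\|a-b\|=\|b-a\|$. For the triangle inequality (5) I would fix any $c\in C$, which exists since $C$ is non-empty, and write $\|a-b\|\leq\|a-c\|+\|c-b\|\leq r(A,C)+r(C,B)$ before taking the supremum over $a,b$; here the uniformity of the bound in $c$ is what makes the estimate go through. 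Monotonicity (6) is immediate, since enlarging $A$ to $C$ only enlarges the index set of the supremum.

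The main work is item (7). The inclusions $A\subseteq\overline{co}(A)$ and $B\subseteq\overline{co}(B)$ together with (6) (and its mirror image via symmetry (4)) give $r(A,B)\leq r(\overline{co}(A),\overline{co}(B))$ for free, so the real content is the reverse inequality. The key observation I would use is that, for each fixed $b$, the map $a\mapsto\|a-b\|$ is convex and continuous; hence its supremum over the convex hull $co(A)$ coincides with its supremum over $A$, because a convex combination cannot exceed the largest of the combined values, and by continuity of the norm the supremum over the closure $\overline{co}(A)$ is unchanged. Applying this once in each slot, I would compute
\begin{align*}
r(\overline{co}(A),\overline{co}(B)) &= \sup_{a'\in\overline{co}(A)}\ \sup_{b'\in\overline{co}(B)}\|a'-b'\|\\
&= \sup_{a'\in\overline{co}(A)}\ \sup_{b\in B}\|a'-b\| = \sup_{b\in B}\ \sup_{a\in A}\|a-b\| = r(A,B),
\end{align*}
where the two suprema are interchanged freely. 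The step requiring the most care is the passage from $co(A)$ to $\overline{co}(A)$, which leans on continuity of the norm; this is the one place where I expect to spend more than a single line.

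Finally, items (8) and (9) follow from the earlier parts. Writing $diam(A)=r(A,A)$, the inclusion $A\subseteq B$ yields $diam(A)=r(A,A)\leq r(A,B)\leq r(B,B)=diam(B)$ by monotonicity (6) applied in each slot, the second slot being reduced to the first via symmetry (4); this is (8). For (9) I would split the defining supremum for $diam(A\cup B)$ according to whether each of the two chosen points lies in $A$ or in $B$: the three possible cases contribute $diam(A)$, $diam(B)$, and $r(A,B)$ respectively, and since every admissible pair falls into one of them, the overall supremum is exactly the maximum of these three quantities.
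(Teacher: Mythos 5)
Your proof is correct. The paper offers no proof at all --- it declares these properties ``immediate from its definition'' --- and your direct verification matches that intended argument, including the only genuinely non-formal point, item (7), which you handle properly by noting that for fixed $b$ the map $a\mapsto\|a-b\|$ is convex and continuous, so its supremum over $\overline{co}(A)$ reduces to the supremum over $A$, one slot at a time.
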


\section{Characterizations of rotundity properties}\label{sec3}

In this section, we present some characterizations of various rotundity notions. It is clear that the closed unit ball and the unit sphere are remotal on the entire space. In the following results we prove that the unique remotality of the unit sphere characterizes the rotund spaces. For any $x,y\in X$, $[x,y]$ denotes the line segment joining $x$ and $y$.

\begin{theorem}\label{rotund}
	The following statements are equivalent.
	\begin{enumerate}
		\item $X$ is rotund.
		\item $S_X$ is uniquely remotal on $X\setminus\{0\}.$
		\item $S_X$ is uniquely remotal on $X\setminus B_{X}.$
		\item $S_X$ is uniquely remotal on $B_{X}\setminus\{0\}.$
		\item  $S_X$ is uniquely remotal on $B_{X}\setminus k B_X$ for any(some) $0<k<1$.
		\item $S_X$ is uniquely remotal on $S_X$.
	\end{enumerate}
\end{theorem}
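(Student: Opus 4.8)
The plan is to exploit the fundamental connection between farthest points from $S_X$ and the geometry captured by rotundity, namely that $y \in Q_{S_X}(-x)$ corresponds to the norm-attaining direction opposite to $x$. The core observation, already prepared in \Cref{farclose}, is that for $x \in X$ the set $Q_{S_X}(x)$ consists of points $y \in S_X$ with $\|x - y\| = 1 + \|x\|$; by the triangle inequality this forces $y$ and $-x$ to be ``aligned'', and uniqueness of such $y$ is exactly the extreme-point/strict-convexity condition on the sphere. I would first establish the central equivalence $(1) \Leftrightarrow (6)$ or $(1) \Leftrightarrow (2)$ directly, and then deduce the remaining clauses from the scaling and reduction machinery in \Cref{cont} and \Cref{CAP}.

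First I would prove $(1) \Rightarrow (2)$. Fix $x \neq 0$ and suppose $y_1, y_2 \in Q_{S_X}(x)$, so $\|x - y_1\| = \|x - y_2\| = 1 + \|x\|$ with $y_1, y_2 \in S_X$. Writing $\|x - y_i\| \leq \|x\| + \|y_i\| = \|x\| + 1$ shows equality holds in the triangle inequality, which means each $x - y_i$ lies on the ray through $x$ and $-y_i$ appropriately; more precisely one extracts that $\frac{x - y_i}{1 + \|x\|}$ are unit vectors whose average must also be a unit vector, and strict convexity (rotundity) forces $y_1 = y_2$. I expect the cleanest route is to normalize: set $u_i = \frac{x - y_i}{\|x\| + 1} \in S_X$ and show $\left\|\frac{u_1 + u_2}{2}\right\| = 1$ using that $\frac{u_1 + u_2}{2} = \frac{2x - (y_1 + y_2)}{2(\|x\|+1)}$ has norm at least $\frac{2\|x\| - \|y_1 + y_2\|}{2(\|x\|+1)} \geq \frac{2\|x\| - 2}{2(\|x\|+1)}$, which is not immediately $1$, so instead I would argue that equality in $\|x - y_i\| = \|x\| + 1$ already pins down $y_i = -\frac{x}{\|x\|} \cdot (\text{something})$ via the extreme point characterization of rotundity. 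The upshot is that rotundity makes the farthest point unique.

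For the converse $(6) \Rightarrow (1)$ (contrapositive), suppose $X$ is not rotund: there exist distinct $z_1, z_2 \in S_X$ with $\left\|\frac{z_1 + z_2}{2}\right\| = 1$. I would then produce a point $x \in S_X$ (the natural candidate being $x = -\frac{z_1 + z_2}{2}$, or $x$ a supporting functional direction) at which $S_X$ has two distinct farthest points, contradicting unique remotality on $S_X$. The segment $[z_1, z_2]$ lies entirely in $S_X$, and each endpoint should realize the same maximal distance $1 + \|x\|$ from a suitably chosen antipodal $x$, giving non-uniqueness. This direction is where I expect the main obstacle: one must verify that both $z_1$ and $z_2$ genuinely attain the farthest distance, which requires choosing $x$ so that the triangle inequality saturates at both endpoints simultaneously. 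Taking $x$ in the direction $-\frac{z_1+z_2}{\|z_1+z_2\|}$ should make $\|x - z_1\| = \|x - z_2\|$ maximal because the whole segment is extremal in the relevant supporting hyperplane.

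Finally, with $(1) \Leftrightarrow (2)$ in hand, the equivalences $(2) \Leftrightarrow (3) \Leftrightarrow (4) \Leftrightarrow (5) \Leftrightarrow (6)$ reduce to manipulating the domain on which uniqueness is tested. By \Cref{farclose}(3) we have $Q_{S_X}(x) = Q_{B_X}(x)$, and by \Cref{CAP} the unique-remotality property of $S_X$ on $\alpha S_X$ is independent of the scale $\alpha > 0$; combined with $(2)$, \Cref{cont}(2) showing the nested inclusions $Q_{S_X}(t_2 x, \delta) \subseteq Q_{S_X}(t_1 x, \delta)$ transfers uniqueness between points on the same ray. Since every nonzero point lies on a ray through $S_X$, testing uniqueness on $X \setminus \{0\}$, on $X \setminus B_X$, on $B_X \setminus \{0\}$, on any annulus $B_X \setminus k B_X$, or on $S_X$ itself are all equivalent via these scaling lemmas. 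I would present these as a short cycle of implications, with the genuinely new content being the single equivalence between rotundity and unique remotality at one representative point, and the rest being domain bookkeeping justified by \Cref{CAP} and \Cref{cont}.
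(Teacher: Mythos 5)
Your proposal is correct and follows essentially the same route as the paper's proof: the forward direction rests on the standard rotundity fact that equality in the triangle inequality forces positive collinearity (the paper cites \cite[Proposition 5.1.11]{Megginson}; your extreme-point variant is equally valid, since $\frac{x-y}{1+\|x\|}\in S_X$ is a proper convex combination of $\frac{x}{\|x\|}$ and $-y$), while the converse uses the very same witness $x=-\frac{z_1+z_2}{2}$, and the verification you flagged as the main obstacle is exactly the paper's one-line computation $\|z_1-x\|=2\left\|\frac{3}{4}z_1+\frac{1}{4}z_2\right\|=2=1+\|x\|$, valid because $[z_1,z_2]\subseteq S_X$. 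Your only structural difference, namely deducing the equivalence of $(2)$--$(6)$ from the scaling result \Cref{CAP} (indeed \Cref{cont} with $\delta=0$ gives $Q_{S_X}(t_1x)=Q_{S_X}(t_2x)$ along every ray) rather than running the paper's two separate contrapositive constructions for $(3)\Rightarrow(1)$ and $(6)\Rightarrow(1)$, is an alternative the paper itself notes immediately after \Cref{rotund}.
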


\begin{proof}
$(1)\Rightarrow (2)$: Let $x \in X\setminus\{0\}$ and $y\in Q_{S_X}(x)$, i.e. $\|x-y\|=1+\|x\|.$ Since $X$ is rotund, by \cite[Proposition 5.1.11]{Megginson}, there exists $\alpha\geq 0$ such that $-y=\alpha x$, which leads to $y=\frac{-x}{\|x\|}$. Hence the implication holds.

\noindent$(2)\Rightarrow (3)$: Obvious.

\noindent$(3)\Rightarrow (1)$: Suppose X is not rotund, there exist $x_1,x_2\in S_X$ such that $x_1\neq x_2$ and $[x_1,x_2]\subseteq S_X.$ For any $t \in [0, 1]$ we have
$$\|tx_1+(1-t)x_2 + (x_1+x_2)\|=3\left\Vert\left(\frac{1+t}{3}\right)x_1+\left(1-\frac{1+t}{3}\right)x_2 \right\Vert=3.$$
Thus, $[x_1,x_2]\subseteq Q_{S_X}\left(-(x_1+x_2)\right)$, which is a contradiction.

\noindent $(2)\Rightarrow(4)\Rightarrow(5)\Rightarrow(6)$: Obvious.

\noindent $(6)\Rightarrow(1)$: Suppose $X$ is not rotund, there exist $x_1,x_2\in S_X$ such that $x_1\neq x_2$ and $[x_1,x_2]\subseteq S_X$. For any $t \in [0, 1]$ we have
$$\left\Vert tx_1+(1-t)x_2 + \frac{x_1+x_2}{2}\right\Vert = 2\left\Vert\left(\frac{1+2t}{4}\right)x_1+\left(1-\frac{1+2t}{4}\right)x_2 \right\Vert=2.$$
Thus, $[x_1,x_2]\subseteq Q_{S_X}\left(-\frac{x_1+x_2}{2}\right)$, which is a contradiction.
\end{proof}



The following characterization of rotund spaces in terms of Chebyshevness is a direct consequence of \Cref{farclose,BallSphereEqui} and \Cref{rotund}.
\begin{corollary} Let $0 < \alpha (\neq 1)$ and $\beta >1$. Then,
the  space $X$ is rotund if and only if $S_X$ is Chebyshev on $\alpha S_X$ if and only if $B_X$ is Chebyshev on $\beta S_X.$
\end{corollary}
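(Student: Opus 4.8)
The plan is to reduce the asserted three-way equivalence to the characterization of rotundity through unique remotality already established in \Cref{rotund}, and to pass between farthest and nearest points using \Cref{farclose}. Recall that for $x\in X$ with $\|x\|\neq 1$ the point $x/\|x\|$ is always a best approximation to $x$ from $S_X$ (and from $B_X$ when $\|x\|>1$), with $\operatorname{dist}(x,S_X)=|\,\|x\|-1\,|$; hence $S_X$ and $B_X$ are proximinal at every such $x$, so $P_{S_X}(x)$ and $P_{B_X}(x)$ are non-empty. By \Cref{rotund} together with \Cref{CAP} and \Cref{BallSphereEqui}, the space $X$ is rotund if and only if $S_X$ is uniquely remotal on $\alpha S_X$, equivalently $B_X$ is uniquely remotal on $\beta S_X$, for the given $\alpha(\neq 1),\beta>1$ (indeed for all positive scalars). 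Thus it suffices to show that, on the relevant sets, unique remotality of $S_X$ (respectively $B_X$) is equivalent to its Chebyshevness.

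For the forward direction I would argue as follows. If $X$ is rotund then $Q_{S_X}(-x)$ is a singleton for every $x\in\alpha S_X$ (note $-x\in X\setminus\{0\}$), so the inclusion $P_{S_X}(x)\subseteq Q_{S_X}(-x)$ from \Cref{farclose} forces $P_{S_X}(x)$ to contain at most one point; combined with proximinality this makes $P_{S_X}(x)$ a singleton, i.e. $S_X$ is Chebyshev on $\alpha S_X$. The statement for $B_X$ on $\beta S_X$ follows verbatim from the inclusion $P_{B_X}(x)\subseteq Q_{B_X}(-x)$ and the identity $Q_{B_X}(-x)=Q_{S_X}(-x)$ of \Cref{farclose}. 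The transfer between the $S_X$- and $B_X$-versions is also immediate, since for $\|x\|>1$ every best approximation from $B_X$ lies on $S_X$, whence $P_{B_X}(x)=P_{S_X}(x)$.

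The genuine obstacle is the reverse implication, Chebyshevness $\Rightarrow$ rotundity: the inclusion in \Cref{farclose} runs from nearest points into farthest points, so one cannot simply invert the forward argument to deduce unique remotality from uniqueness of nearest points. I would instead argue by contraposition with an explicit construction. Suppose $X$ is not rotund, so there are $x_1\neq x_2\in S_X$ with $[x_1,x_2]\subseteq S_X$, and set $m=\frac{x_1+x_2}{2}\in S_X$ and $x=\alpha m$, so that $\|x\|=\alpha$ and $\operatorname{dist}(x,S_X)=|\alpha-1|$. For $t\in[0,1]$ write $p_t=t x_1+(1-t)x_2\in S_X$; a direct computation gives
\[
x-p_t=\Bigl(\tfrac{\alpha}{2}-t\Bigr)x_1+\Bigl(\tfrac{\alpha}{2}-1+t\Bigr)x_2 .
\]
When $t$ ranges over a non-degenerate subinterval of $[0,1]$ around $\tfrac12$ (namely where both coefficients share the sign of $\alpha-1$), the right-hand side is $|\alpha-1|$ times a convex combination of $x_1$ and $x_2$, hence has norm exactly $|\alpha-1|=\operatorname{dist}(x,S_X)$; thus $p_t\in P_{S_X}(x)$ for all such $t$. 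This exhibits infinitely many nearest points to $x\in\alpha S_X$, contradicting Chebyshevness of $S_X$ on $\alpha S_X$, and the same point $x=\beta m$ handles the $B_X$ case since these $p_t$ also lie in $B_X$. The key point of the computation is that the interval of admissible $t$ is non-degenerate, which holds precisely because $\alpha\neq 1$ (respectively $\beta>1$); this is exactly where those hypotheses enter, and it is also what makes the excluded case $\alpha=1$ genuinely exceptional.
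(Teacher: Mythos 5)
Your proposal is correct, and it is worth separating its two halves when comparing with the paper. The paper offers no written proof at all: it only asserts that the corollary is a direct consequence of \Cref{farclose,BallSphereEqui} and \Cref{rotund}. Your forward half (rotund $\Rightarrow$ Chebyshev) is exactly that intended consequence: unique remotality of $S_X$ at $-x$ from \Cref{rotund}, the inclusions $P_{S_X}(x)\subseteq Q_{S_X}(-x)$ and $P_{B_X}(x)\subseteq Q_{B_X}(-x)$ together with $Q_{B_X}(-x)=Q_{S_X}(-x)$ from \Cref{farclose}, plus the trivial proximinality of $S_X$ and $B_X$ at the points in question. The reverse half is where you genuinely depart from the paper, and your diagnosis of the obstacle is accurate: the inclusions of \Cref{farclose} run from nearest-point sets \emph{into} farthest-point sets, so uniqueness of nearest points cannot be pushed through them to recover unique remotality, and \Cref{BallSphereEqui} as stated concerns only the remotality properties of \Cref{defn}, not Chebyshevness; read literally, the paper's citation therefore delivers only the forward implications. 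Your contrapositive construction fills this gap correctly: with $m=\frac{x_1+x_2}{2}$ and $x=\alpha m$, the points $p_t=tx_1+(1-t)x_2$ for which both coefficients of $x-p_t$ share the sign of $\alpha-1$ form a family of nearest points to $x$ from $S_X$ parametrized by a $t$-interval that is non-degenerate precisely because $\alpha\neq 1$ (and, for $\beta>1$, these are also nearest points from $B_X$, since the distance from $x$ to $B_X$ then equals the distance to $S_X$), contradicting Chebyshevness; your identification of $P_{B_X}(x)=P_{S_X}(x)$ for $\|x\|>1$ is likewise a clean substitute for \Cref{BallSphereEqui}, which does not formally cover Chebyshevness. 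Note that your construction is the nearest-point analogue of the segment arguments the paper itself uses for $(3)\Rightarrow(1)$ and $(6)\Rightarrow(1)$ in \Cref{rotund}, so your proof is entirely in the paper's spirit; what it buys over the paper's one-line justification is a complete and self-contained argument, at the cost of the brevity the paper achieves by leaning (somewhat too heavily) on its earlier results.
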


 The equivalence of statements $(2)$ to $(6)$ in \Cref{rotund} also follows from \Cref{CAP}. Now, we present another characterization of rotund spaces in terms of generalized diameter.

\begin{theorem}\label{rotund2}
	The following statements are equivalent.
	\begin{enumerate}
		\item $X$ is rotund.
		\item $r\left(Q_{B_X}(x_1), Q_{B_X}(x_2)\right)=\|x_1-x_2\|$ for every $x_1, x_2\in S_X$.
		\item $r\left(Q_{S_X}(x_1), Q_{S_X}(x_2)\right)=\|x_1-x_2\|$ for every $x_1, x_2\in S_X$.
       	\item $r\left(Q_{S_X}(x_1), Q_{S_X}(x_2)\right)=\|x_1-x_2\|$ for every $x_1, x_2\in S_X$ with $x_1\neq x_2$.
	\end{enumerate}
\end{theorem}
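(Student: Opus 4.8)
The plan is to close the cycle $(1)\Rightarrow(3)\Rightarrow(4)\Rightarrow(1)$ and fold in $(2)$ for free. Since $Q_{B_X}(x)=Q_{S_X}(x)$ for every $x$ by \Cref{farclose}(3), statements $(2)$ and $(3)$ are literally identical, so it suffices to treat $(3)$; and $(3)\Rightarrow(4)$ is trivial, being merely a restriction to distinct pairs. A useful preliminary observation is that one of the two inequalities behind these identities is automatic: since $\|x-(-x)\|=2=1+\|x\|$ we have $-x\in Q_{S_X}(x)$ for every $x\in S_X$, so evaluating at the pair $(-x_1,-x_2)$ gives $r\left(Q_{S_X}(x_1),Q_{S_X}(x_2)\right)\geq\|x_1-x_2\|$ unconditionally. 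Thus both $(3)$ and $(4)$ are really assertions that the reverse inequality holds, and the content lies in the upper bound.

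For $(1)\Rightarrow(3)$ I would invoke \Cref{rotund}: if $X$ is rotund then for $x\in S_X$ the set $Q_{S_X}(x)$ is the singleton $\{-x\}$ (this is exactly \cite[Proposition 5.1.11]{Megginson} used in the proof of $(1)\Rightarrow(2)$ of \Cref{rotund}, noting $-x/\|x\|=-x$ when $\|x\|=1$). Consequently $r\left(Q_{S_X}(x_1),Q_{S_X}(x_2)\right)=r(\{-x_1\},\{-x_2\})=\|x_1-x_2\|$, which yields $(3)$ and hence $(2)$.

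The crux is $(4)\Rightarrow(1)$, which I expect to be the main obstacle. It is instructive to note first why $(3)$ is easy to reverse: putting $x_1=x_2=x$ forces $diam\left(Q_{S_X}(x)\right)=r\left(Q_{S_X}(x),Q_{S_X}(x)\right)=0$ (by \Cref{GD-properties}(1)), i.e.\ unique remotality, whence rotundity by \Cref{rotund}. Statement $(4)$, however, explicitly forbids the diagonal $x_1=x_2$, so this shortcut is unavailable and a genuine construction is required. I would argue by contrapositive: if $X$ is not rotund, pick distinct $u,v\in S_X$ with $[u,v]\subseteq S_X$. The key lemma is that every point of the flat segment shares the same large far-set: for any $c,w\in[u,v]$, convexity gives $(c+w)/2\in[u,v]\subseteq S_X$, so $\|c+w\|=2=r(S_X,c)$, that is $-w\in Q_{S_X}(c)$; hence $[-u,-v]\subseteq Q_{S_X}(c)$ for \emph{every} $c\in[u,v]$. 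Now choose two distinct points of the segment that are strictly closer together than its length, for instance $c_1=u$ and $c_2=\tfrac{u+v}{2}$, so that $\|c_1-c_2\|=\tfrac12\|u-v\|$. Since $[-u,-v]$ is contained in both $Q_{S_X}(c_1)$ and $Q_{S_X}(c_2)$, monotonicity of the generalized diameter (\Cref{GD-properties}) gives $r\left(Q_{S_X}(c_1),Q_{S_X}(c_2)\right)\geq diam([-u,-v])=\|u-v\|>\tfrac12\|u-v\|=\|c_1-c_2\|$, contradicting $(4)$.

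The obstacle is really in locating this construction rather than in any computation. The naive attempts fail to give a strict inequality: the diagonal is excluded by hypothesis, and using the two endpoints $u,v$ of the flat segment produces far-sets both containing $[-u,-v]$, whose diameter is \emph{exactly} $\|u-v\|$, so only equality results. The decisive point is that the shared far-set $[-u,-v]$ has diameter equal to the full length $\|u-v\|$, while one may take two distinct points of the flat segment (endpoint together with midpoint) whose mutual distance is strictly less than $\|u-v\|$; this gap is what breaks the equality required by $(4)$.
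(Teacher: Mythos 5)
Your proposal is correct and takes essentially the same route as the paper: the forward implications are dispatched via \Cref{farclose} and \Cref{rotund}, and $(4)\Rightarrow(1)$ is proved by contradiction from a flat segment $[x_1,x_2]\subseteq S_X$, exhibiting two \emph{distinct} points whose mutual distance is half the segment's length while their far-sets force a generalized diameter of at least the full length. The paper uses the points $-\frac{x_1+3x_2}{4}$ and $-\frac{3x_1+x_2}{4}$ with the single farthest points $x_1$ and $x_2$, whereas you use the endpoint and midpoint together with the containment $[-u,-v]\subseteq Q_{S_X}(c)$ for all $c\in[u,v]$ — the same mechanism with slightly different bookkeeping.
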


\begin{proof}
$(1)\Rightarrow (2) \Rightarrow (3) \Rightarrow (4)$: These implications follow from \Cref{farclose} and \Cref{rotund}.



\noindent$(4)\Rightarrow (1)$: Suppose $X$ is not rotund. Then there exist $x_1,x_2\in S_X$ such that $x_1\neq x_2$ and $[x_1, x_2] \subseteq S_X$. Let $y_1=-\frac{x_1+3x_2}{4}$ and $y_2=-\frac{3x_1+x_2}{4}.$ It is clear that $y_1 \in S_X$ and $ \Vert x_1 - y_1 \Vert = \left\Vert \frac{5x_1+3x_2}{4}\right\Vert=2.$ Therefore, we have $x_1 \in Q_{S_X}(y_1).$ Similarly, $x_2 \in Q_{S_X}(y_2).$ Thus, $\Vert y_1-y_2\Vert < r(Q_{S_X}(y_1), Q_{S_X}(y_2)),$ which is a contradiction.
\end{proof}

It is easy to observe that the space $X$ is of finite dimension if and only if its unit sphere $S_X$ is sup-compact on $\{0\}.$ However, in the following result we see that if the unit sphere is sup-compact at every non-zero element then the space is compactly locally uniformly rotund and vice versa.

\begin{theorem}\label{clur-char}
	The following statements are equivalent.
	\begin{enumerate}
		\item $X$ is $CLUR$.
		\item $S_X$ is sup-compact on $X\setminus \{0\}$.
		\item $Q_{S_X}(x)$ is compact and $Q_{S_X}\left(x,\frac{1}{n}\right) \xrightarrow{V} Q_{S_X}(x)$ for every $x\in X\setminus \{0\}$.
		\item $Q_{S_X}(x)$ is compact and $Q_{S_X}\left(x,\frac{1}{n}\right) \xrightarrow{H} Q_{S_X}(x)$ for every $x\in X\setminus \{0\}$.
		\item $Q_{S_X}(x)$ is compact and $S_X$ is strongly remotal on $X\setminus \{0\}$.
		\item $\alpha\left(Q_{S_X}\left(x,\frac{1}{n}\right)\right)\to 0$ for every $x\in X\setminus \{0\}$.
	\end{enumerate}
\end{theorem}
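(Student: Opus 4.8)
The plan is to first dispose of the ``pointwise'' equivalences $(2)$--$(6)$, which carry no new content, and then to isolate the genuine statement, namely $(1)\Leftrightarrow(2)$.

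For $(2)\Leftrightarrow(3)\Leftrightarrow(4)\Leftrightarrow(5)\Leftrightarrow(6)$ I would simply invoke \Cref{supcompact}. Since $S_X$ is remotal at every nonzero point, \Cref{supcompact} applies with $F=S_X$ at each fixed $x\in X\setminus\{0\}$, and its five equivalent conditions are precisely the conditions appearing in $(2)$--$(6)$ at that particular $x$. Quantifying over all $x\in X\setminus\{0\}$ then yields the equivalence of $(2)$--$(6)$ verbatim.

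The heart of the argument is $(1)\Leftrightarrow(2)$, which I would reduce to the unit sphere first. Fix $x\in S_X$ and observe that $r(S_X,-x)=2$, the supremum being attained at $x$: indeed $\|-x-y\|=\|x+y\|\le 2$ for $y\in S_X$, with equality at $y=x$. Consequently a sequence $(x_n)$ in $S_X$ is a maximizing sequence in $S_X$ for $-x$ if and only if $\|x+x_n\|\to 2$, that is, if and only if $\left\|\frac{x_n+x}{2}\right\|\to 1$. This identifies the defining sequences in the CLUR condition with the maximizing sequences for $-x$, and since the limit of a convergent sequence in $S_X$ again lies in $S_X$, the requirement that $(x_n)$ have a convergent subsequence is exactly sup-compactness of $S_X$ at $-x$. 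As $x$ ranges over $S_X$ the point $-x$ ranges over $-S_X=S_X$; hence $X$ is CLUR if and only if $S_X$ is sup-compact on $S_X$.

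It remains to pass from $S_X$ to $X\setminus\{0\}$. Writing $X\setminus\{0\}=\bigcup_{\alpha>0}\alpha S_X$, sup-compactness of $S_X$ on $X\setminus\{0\}$ means sup-compactness of $S_X$ on $\alpha S_X$ for every $\alpha>0$. By \Cref{CAP} this property is independent of the scalar $\alpha>0$, so it holds for every $\alpha>0$ as soon as it holds for $\alpha=1$; thus $S_X$ is sup-compact on $X\setminus\{0\}$ if and only if it is sup-compact on $S_X$, and combining with the previous paragraph gives $(1)\Leftrightarrow(2)$. The only delicate point I anticipate is the correct matching of the CLUR hypothesis with maximizing sequences for $-x$ (rather than for $x$); once the sign and the value $r(S_X,-x)=2$ are pinned down, the remaining steps are routine applications of \Cref{supcompact} and \Cref{CAP}.
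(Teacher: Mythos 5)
Your proposal is correct and follows essentially the same route as the paper: the equivalences $(2)$--$(6)$ are delegated to \Cref{supcompact}, and $(1)\Leftrightarrow(2)$ rests on identifying the CLUR test sequences at $x\in S_X$ with maximizing sequences in $S_X$ for the antipodal point $-x$, with \Cref{CAP} handling the passage between $S_X$ and general nonzero points. The only (cosmetic) difference is that the paper invokes part $(1)$ of \Cref{CAP} to rescale maximizing sequences inside the implication $(1)\Rightarrow(2)$, whereas you invoke part $(3)$ of \Cref{CAP} to reduce the whole sup-compactness property to the sphere first; both are sound.
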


\begin{proof}
$(1)\Rightarrow (2)$: Let $(x_n)$ be a maximizing sequence in $S_X$ for $x\in X\setminus\{0\}$. Then, it follows from  \Cref{CAP}  that $(x_n)$ is a maximizing sequence in $S_X$ for $\frac{x}{\|x\|}$. Since $X$ is $CLUR$ and $\left\|x_n+\frac{(-x)}{\|x\|}\right\|\to 2,$ there exists a  subsequence $(x_{n_k})$ of $(x_n)$ such that $x_{n_k} \to y$ for some $y\in S_X$. Therefore, $S_X$ is sup-compact at every $x \in X \setminus \{0\}.$


\noindent$(2)\Rightarrow (1)$: Let $(x_n)$ be a sequence in $S_X$ and $x\in S_X$ such that $\left\|\frac{x_n+x}{2}\right\|\to 1$. Notice that $(x_n)$ is a maximizing sequence in $S_X$ for $-x\in S_X$. Thus, by assumption, $(x_n)$ has a convergent subsequence. i.e $X$ is CLUR.

\noindent$(2)\Leftrightarrow (3)\Leftrightarrow(4)\Leftrightarrow(5)\Leftrightarrow(6)$:  These implications follow from \Cref{supcompact}.
\end{proof}


The following characterization \cite[Theorem 3.14]{thota} can be obtained as a consequence of \Cref{farclose} and \Cref{clur-char}.
\begin{corollary}
The space $X$ is CLUR if and only if $S_X$ is approximatively compact on $X \setminus \{0\}.$
\end{corollary}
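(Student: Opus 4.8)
The plan is to combine \Cref{clur-char} with the dictionary between nearly best approximants and almost farthest points furnished by \Cref{farclose}. By \Cref{clur-char}, $X$ is CLUR if and only if $S_X$ is sup-compact on $X\setminus\{0\}$, so it suffices to show that $S_X$ is sup-compact on $X\setminus\{0\}$ exactly when it is approximatively compact on $X\setminus\{0\}$. Here approximative compactness is the best-approximation analogue of sup-compactness: it asks that every minimizing sequence in $S_X$ for a point (a sequence $(y_n)$ with $\|x-y_n\|\to d(x,S_X)$) admit a subsequence converging in $S_X$. Thus the whole task reduces to translating between minimizing sequences (governed by $P_{S_X}$) and maximizing sequences (governed by $Q_{S_X}$), which is precisely what \Cref{farclose} is designed to do.

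For the implication from CLUR to approximative compactness I would fix $x\in X\setminus\{0\}$ and a minimizing sequence $(y_n)$ in $S_X$ for $x$. Setting $\delta_n=\|x-y_n\|-d(x,S_X)\to 0$, each $y_n$ lies in $P_{S_X}(x,\delta_n)$, so \Cref{farclose} gives $y_n\in Q_{S_X}(-x,\delta_n)$, i.e. $\|-x-y_n\|\ge r(S_X,-x)-\delta_n$; since $\|-x-y_n\|\le r(S_X,-x)$ always, $(y_n)$ is a maximizing sequence in $S_X$ for $-x$. As $-x\in X\setminus\{0\}$ and $X$ is CLUR, sup-compactness at $-x$ (via \Cref{clur-char}) produces a convergent subsequence, so $S_X$ is approximatively compact at $x$. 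Letting $x$ range over $X\setminus\{0\}$ settles this direction.

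For the converse I would begin from a maximizing sequence for $-x$ (equivalently, from a CLUR test sequence at $x\in S_X$, which by the proof of \Cref{clur-char} is exactly a maximizing sequence for $-x$) and attempt to present it, after the harmless rescalings of the base point permitted by \Cref{CAP} and \Cref{cont}, as a minimizing sequence for a suitably chosen point, so that approximative compactness forces a convergent subsequence and hence sup-compactness. This is the step I expect to be the main obstacle. The inclusion in \Cref{farclose} runs only one way—almost farthest points form a strictly larger family than nearly best approximants—so the maximizing-to-minimizing passage cannot be obtained merely by reversing an inclusion; it must instead use the hypothesis at \emph{every} nonzero point rather than pointwise. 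If a direct passage resists, I would transfer the problem to $B_X$ via \Cref{BallSphereEqui}, where an exterior point's metric projection onto the ball admits a cleaner description, and argue by contraposition there, which is where I anticipate the genuine work to lie.
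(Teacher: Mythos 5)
Your first direction (CLUR implies approximative compactness of $S_X$ on $X\setminus\{0\}$) is correct and complete, and it is precisely the derivation the paper's remark envisions: statement (2) of \Cref{farclose} converts a minimizing sequence for $x$ into a maximizing sequence for $-x$, and \Cref{clur-char} supplies sup-compactness at $-x$. (The restriction $\delta\in[0,2]$ in \Cref{farclose} is harmless here, since your $\delta_n$ never exceed $2$.)

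The converse direction, however, is a genuine gap, and your own diagnosis of the obstacle is accurate but never overcome. A maximizing sequence in $S_X$ for $-x$ need not be a minimizing sequence for \emph{any} fixed nonzero point: in $(\mathbb{R}^2,\|\cdot\|_\infty)$ with $x=(1,1)$, the point $y=(1,-1)$ satisfies $\|-x-y\|=2=r(S_X,-x)$, yet $\|\lambda x-y\|=\lambda+1$ for every $\lambda>0$ while $d(\lambda x,S_X)=|\lambda-1|$, so no point on the ray through $x$ (in particular not $2x$) detects $y$ as a nearly best approximant. The natural association $y\mapsto x+y$, for which both $x$ and $y$ are nearly best approximants from $S_X$, produces a point that moves with $y$; when the bad CLUR-sequence $(x_n)$ has no convergent subsequence, neither does $(x+x_n)$, so there is no fixed point at which the pointwise hypothesis of approximative compactness can be applied. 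Your fallback does not repair this: \Cref{BallSphereEqui} transfers only the farthest-point properties of \Cref{defn} between $B_X$ and $S_X$ and says nothing about approximative compactness, and in any case the metric projection of an exterior point onto $B_X$ lands on $S_X$ with $d(w,B_X)=d(w,S_X)$, so passing to the ball changes nothing. For comparison with the paper: the paper does not prove this direction either; it presents the statement as \cite[Theorem 3.14]{thota} and remarks that it can be obtained from \Cref{farclose} and \Cref{clur-char}. The direction you completed is the one those two results genuinely yield; the reverse implication is the substantive content of the cited theorem, and closing it requires an idea that appears neither in your proposal nor in this paper.
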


Now, we present a characterization for local uniform rotundity in terms of strong unique remotality of the unit sphere. For this we need the following lemma.

\begin{lemma}\label{GD-ineq}
Let $x, x' \in X \setminus \{0\}$ and $l= \max\{1, \frac{1}{\|x\|}, \frac{1}{\|x'\|}\}.$ Then for any $\delta >0$ we have
$$ r(Q_{S_X}\left(x, \delta), Q_{S_X}(x', \delta)\right) \leq r\left(Q_{S_X}\left(\frac{x}{\|x\|}, l\delta\right),  Q_{S_X}\left(\frac{x'}{\|x'\|}, l\delta\right)\right). $$
\end{lemma}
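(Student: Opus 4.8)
The plan is to reduce the claimed inequality between generalized diameters to a pair of set inclusions and then invoke the monotonicity of $r(\cdot,\cdot)$. Writing $u=\frac{x}{\|x\|}$ and $u'=\frac{x'}{\|x'\|}$, the heart of the matter is the single-variable claim that for each $z\in\{x,x'\}$ and every $\delta>0$,
$$Q_{S_X}(z,\delta)\subseteq Q_{S_X}\!\left(\tfrac{z}{\|z\|},l\delta\right).$$
Once this is established for both $x$ and $x'$, the conclusion is immediate: by \Cref{GD-properties}(6) together with the symmetry in \Cref{GD-properties}(4) (or directly from the definition of $r$ as a supremum over the two sets), enlarging each of the two sets can only increase the generalized diameter, which yields exactly $r(Q_{S_X}(x,\delta),Q_{S_X}(x',\delta))\le r(Q_{S_X}(u,l\delta),Q_{S_X}(u',l\delta))$. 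Here I would note in passing that each $Q_{S_X}(\cdot,\cdot)$ is a closed bounded subset of $S_X$, so it lies in $CB(X)$ and the generalized-diameter machinery applies.

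To prove the inclusion I would split into cases according to whether $\|z\|\ge 1$ or $\|z\|<1$, and in each case apply the appropriate half of \Cref{cont}(2) in the direction $\frac{z}{\|z\|}\in S_X$. If $\|z\|>1$, take $t_1=1<t_2=\|z\|$; the first inclusion of \Cref{cont}(2) gives $Q_{S_X}(z,\delta)\subseteq Q_{S_X}(\frac{z}{\|z\|},\delta)$, and since $l\ge 1$ the monotonicity of $Q_{S_X}$ in its radius (\Cref{Q_F}(4)) enlarges $\delta$ to $l\delta$. If $\|z\|=1$ the inclusion is trivial. If $\|z\|<1$, take $t_1=\|z\|<t_2=1$; the second inclusion of \Cref{cont}(2) gives $Q_{S_X}(z,\delta)\subseteq Q_{S_X}(\frac{z}{\|z\|},\frac{1}{\|z\|}\delta)$, and since $l\ge\frac{1}{\|z\|}$ we again absorb the radius into $l\delta$ via \Cref{Q_F}(4). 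In all cases the claimed inclusion holds.

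The only delicate point — and the reason the constant $l$ is defined the way it is — is the bookkeeping of the radius in the case $\|z\|<1$. Passing from the direction $z$ to the normalized direction $\frac{z}{\|z\|}$ forces the use of the second, radius-inflating inclusion in \Cref{cont}, which introduces precisely the factor $\frac{t_2}{t_1}=\frac{1}{\|z\|}$. The definition $l=\max\{1,\frac{1}{\|x\|},\frac{1}{\|x'\|}\}$ is tailored to dominate simultaneously the trivial factor $1$ arising when a norm is at least $1$ and the factor $\frac{1}{\|z\|}$ arising when a norm is below $1$, and to do so uniformly for both $x$ and $x'$ so that the single common radius $l\delta$ works on both sides. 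Beyond this the argument is a routine application of the inclusion lemma and the elementary monotonicity of $Q_{S_X}$ and of the generalized diameter, so I do not expect any genuine obstruction.
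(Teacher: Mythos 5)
Your proposal is correct and follows essentially the same route as the paper's own proof: both reduce the inequality to the inclusion $Q_{S_X}(z,\delta)\subseteq Q_{S_X}\bigl(\frac{z}{\|z\|},l\delta\bigr)$ for $z\in\{x,x'\}$, established by a case split on $\|z\|$ versus $1$ using the two inclusions of \Cref{cont} together with the radius-monotonicity in \Cref{Q_F}, and then conclude via the monotonicity and symmetry of the generalized diameter in \Cref{GD-properties}. The only cosmetic difference is that you treat $\|z\|=1$ as a separate (trivial) case, while the paper folds it into the case $\|z\|\leq 1$.
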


\begin{proof}
Let $x\in X\setminus \{0\}$ and $\delta >0$. If $\|x\|\leq 1$, then, by \Cref{cont}, we have
$$Q_{S_X}\left(x, \delta\right) \subseteq Q_{S_X}\left(\frac{x}{\|x\|}, \frac{\delta}{\|x\|}\right).$$
If $1< \|x\|$, then, by \Cref{cont}, it follows that
$$Q_{S_X}\left(x,\delta\right)= Q_{S_X}\left(\|x\|\frac{x}{\|x\|}, \delta\right)\subseteq Q_{S_X}\left(\frac{x}{\|x\|}, \delta\right).$$
Thus, for any $x,x'\in X\setminus \{0\}$ and $l=\max\{1,\frac{1}{\|x\|},\frac{1}{\|x'\|}\}$, we have
$$Q_{S_X}\left(x, \delta\right)\subseteq  Q_{S_X}\left(\frac{x}{\|x\|}, l\delta\right) \mbox{and } Q_{S_X}\left(x', \delta\right)\subseteq  Q_{S_X}\left(\frac{x'}{\|x'\|}, l\delta\right).$$
Hence, the inequality follows from \Cref{GD-properties}.
\end{proof}

\begin{theorem}\label{LURT}
	The following statements are equivalent.
	\begin{enumerate}
		\item $X$ is $LUR.$
		\item $r\left(Q_{S_X}\left(x,\frac{1}{n}\right), Q_{S_X}\left(x',\frac{1}{n}\right)\right)\to \left\|\frac{x}{\|x\|}-\frac{x'}{\|x'\|}\right\|$ for every $x,x'\in X\setminus\{0\}$.		
		\item $diam \left(Q_{S_{X}}\left(x,\frac{1}{n}\right)\right)\to 0$ for every $x\in X\setminus\{0\}.$
		\item Every maximizing sequence  in $S_X$ for $x\in X\setminus\{0\}$ converges to $\frac{-x}{\|x\|}$.
		\item $Q_{S_X}(x)=\left\{\frac{-x}{\|x\|}\right\}$ and $Q_{S_X}\left(x,\frac{1}{n}\right) \xrightarrow{V} Q_{S_X}(x)$ for every $x\in X\setminus\{0\}$.
		\item $Q_{S_X}(x)=\left\{\frac{-x}{\|x\|}\right\}$ and $Q_{S_X}\left(x,\frac{1}{n}\right)  \xrightarrow{H}Q_{S_X}(x)$ for every $x\in X\setminus\{0\}$.
		\item $S_X$ is SUR on $X\setminus\{0\}$.
	\end{enumerate}
\end{theorem}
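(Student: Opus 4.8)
The plan is to prove the chain of equivalences by exploiting the reductions already established in the preliminaries, so that the genuinely new content is concentrated in connecting local uniform rotundity to the behaviour of almost-farthest-point sets on the unit sphere. First I would observe that by \Cref{CAP} and \Cref{BallSphereEqui}, every one of the statements (1)--(7) may be reduced to the case where the point $x$ lies on $S_X$: a maximizing sequence for $x\in X\setminus\{0\}$ is maximizing for $\frac{x}{\|x\|}$, and the remotality properties are invariant under positive scaling of the argument. Having made this reduction, I would note that many of the equivalences among (3), (4), (5), (6), (7) are not new: for a fixed $x\in S_X$ with $Q_{S_X}(x)=\{-x\}$, \Cref{remseq} already gives the equivalence of ``every maximizing sequence converges'', ``$diam(Q_{S_X}(x,\frac1n))\to 0$'', the Vietoris and Hausdorff convergence of $Q_{S_X}(x,\frac1n)$ to its limit, and SUR at $x$. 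So the core of the argument is to link $X$ being $LUR$ with these pointwise conditions holding at every $x\in S_X$, and to handle the one statement, namely (2), that involves the generalized diameter $r(\cdot,\cdot)$ and a pair of points.

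The plan for the central equivalence $(1)\Leftrightarrow(4)$ is the most direct route. For $(1)\Rightarrow(4)$, I would take a maximizing sequence $(x_n)$ in $S_X$ for $x\in S_X$, which means $\|x_n-x\|\to 1+\|x\|=2$; writing this as $\left\|\frac{x_n+(-x)}{2}\right\|\to 1$ and applying the definition of $LUR$ with the fixed point $-x\in S_X$ yields $x_n\to -x=\frac{-x}{\|x\|}$. Conversely, for $(4)\Rightarrow(1)$, given $x\in S_X$ and $(x_n)$ in $S_X$ with $\left\|\frac{x_n+x}{2}\right\|\to 1$, I would recognize $(x_n)$ as a maximizing sequence in $S_X$ for $-x$, so by (4) it converges to $\frac{-(-x)}{\|{-x}\|}=x$, which is exactly $LUR$. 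This step is essentially a translation of the $LUR$ definition into the language of maximizing sequences and should be routine. The equivalence $(4)\Leftrightarrow(3)\Leftrightarrow(5)\Leftrightarrow(6)\Leftrightarrow(7)$ I would then deduce by invoking \Cref{remseq} at each nonzero point (after the scaling reduction), together with the identification of the unique farthest point as $\frac{-x}{\|x\|}$ coming from rotundity, since $LUR$ implies rotund and \Cref{rotund} pins down $Q_{S_X}(x)$.

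The step I expect to be the main obstacle is incorporating statement (2), which asserts convergence of the generalized diameter $r\!\left(Q_{S_X}(x,\tfrac1n),Q_{S_X}(x',\tfrac1n)\right)$ to $\left\|\frac{x}{\|x\|}-\frac{x'}{\|x'\|}\right\|$ for \emph{two} points. The natural direction is to show $(3)\Rightarrow(2)$ and $(2)\Rightarrow(3)$. For $(3)\Rightarrow(2)$, I would use property (9) (or (8)) of \Cref{GD-properties}, namely that $r(A,B)$ controls and is controlled by the diameters of $A$ and $B$ together with the distance between representative points; concretely, since $Q_{S_X}(x,\tfrac1n)$ shrinks in diameter to a single point $\frac{-x}{\|x\|}$ and likewise for $x'$, the generalized diameter $r$ must converge to the distance between those two limit points. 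Making this precise will require the triangle inequality for $r(\cdot,\cdot)$ (\Cref{GD-properties}(5)) to estimate $r\!\left(Q_{S_X}(x,\tfrac1n),Q_{S_X}(x',\tfrac1n)\right)$ against $r(Q_{S_X}(x),Q_{S_X}(x'))$ plus two diameter terms that vanish. For the reverse $(2)\Rightarrow(3)$, specializing $x'=x$ (or a nearby point) should force $diam\!\left(Q_{S_X}(x,\tfrac1n)\right)\to 0$; here the reduction lemma \Cref{GD-ineq} is designed precisely to transfer the generalized-diameter estimate from arbitrary nonzero arguments to normalized ones, so I would apply it to control the scaling discrepancy between $x$ and $\frac{x}{\|x\|}$. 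The delicate point is that setting $x'=x$ in (2) gives $r\!\left(Q_{S_X}(x,\tfrac1n),Q_{S_X}(x,\tfrac1n)\right)\to 0$, and by \Cref{GD-properties}(8) with $A=B$ this quantity equals $diam\!\left(Q_{S_X}(x,\tfrac1n)\right)$, which is exactly (3); I would double-check that the right-hand side $\left\|\frac{x}{\|x\|}-\frac{x}{\|x\|}\right\|=0$ makes this specialization legitimate, so that no independent argument beyond \Cref{GD-properties} and \Cref{GD-ineq} is needed.
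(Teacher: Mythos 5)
Your proposal is sound in outline and, in the one place that matters, takes a genuinely different route from the paper. The paper's hardest step is a \emph{direct} proof of $(1)\Rightarrow(2)$: it first reduces to points of $S_X$ via \Cref{GD-ineq}, then runs a contradiction argument, extracting sequences $y_n\in Q_{S_X}(y,\delta_n)$, $y_n'\in Q_{S_X}(y',\delta_n)$ with $\|y_n-y_n'\|-\|y-y'\|>\frac{\epsilon}{2}$ and using $LUR$ to force $y_n\to -y$, $y_n'\to -y'$; it then closes the cycle as $(2)\Rightarrow(3)$ (by $x=x'$), $(3)\Leftrightarrow\cdots\Leftrightarrow(7)$ (by \Cref{remseq}), and $(4)\Rightarrow(1)$. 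You instead prove $(1)\Leftrightarrow(4)$ definitionally and derive $(2)$ from $(3)$ by a purely metric sandwich: since $p=\frac{-x}{\|x\|}\in Q_{S_X}\left(x,\frac{1}{n}\right)$ and $q=\frac{-x'}{\|x'\|}\in Q_{S_X}\left(x',\frac{1}{n}\right)$ for every $n$, the triangle inequality of \Cref{GD-properties} through the singletons $\{p\}$ and $\{q\}$ gives
\begin{equation*}
\|p-q\|\;\leq\; r\left(Q_{S_X}\left(x,\tfrac{1}{n}\right),Q_{S_X}\left(x',\tfrac{1}{n}\right)\right)\;\leq\; diam\left(Q_{S_X}\left(x,\tfrac{1}{n}\right)\right)+\|p-q\|+diam\left(Q_{S_X}\left(x',\tfrac{1}{n}\right)\right),
\end{equation*}
and the outer terms vanish under $(3)$. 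This is a real simplification: it works directly for arbitrary $x,x'\in X\setminus\{0\}$, so it bypasses both the paper's $LUR$-based contradiction argument and \Cref{GD-ineq} altogether (your passing mention of \Cref{GD-ineq} for $(2)\Rightarrow(3)$ is likewise unnecessary, since $x'=x$ plus $r(A,A)=diam(A)$ already does the job, exactly as in the paper).

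One caveat needs fixing. You justify the identification $Q_{S_X}(x)=\left\{\frac{-x}{\|x\|}\right\}$ by ``$LUR$ implies rotund and \Cref{rotund} pins down $Q_{S_X}(x)$.'' That reasoning is only available in implications whose hypothesis is $(1)$ (or the already-equivalent $(4)$); in the reverse directions $(3)\Rightarrow(4)$ and $(7)\Rightarrow(4)$, and inside your $(3)\Rightarrow(2)$ argument, you do not yet have $LUR$, so invoking it is circular. The correct justification is elementary and assumption-free: $\left\|x-\left(\frac{-x}{\|x\|}\right)\right\|=\|x\|+1=r(S_X,x)$, so $\frac{-x}{\|x\|}\in Q_{S_X}(x)\subseteq Q_{S_X}(x,\delta)$ for every $x\neq 0$ and $\delta\geq 0$. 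Hence whenever \Cref{remseq} yields that $Q_{S_X}(x)$ is a singleton (or that every maximizing sequence converges), the singleton, respectively the limit, is automatically $\frac{-x}{\|x\|}$; and this same containment is precisely what legitimizes taking $p$ and $q$ inside the almost-farthest-point sets in your sandwich bound. With that one-line observation substituted for the appeal to rotundity, your proof is complete.
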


\begin{proof}
$(1)\Rightarrow(2)$: Notice that $\left\|\frac{x}{\|x\|}-\frac{x'}{\|x'\|}\right\| \leq r\left(Q_{S_X}\left(x,\frac{1}{n}\right), Q_{S_X}\left(x',\frac{1}{n}\right)\right),$ for every $x, x' \in X$ and $n \in \mathbb{N}.$ Thus, in view of \Cref{GD-ineq}, it is enough to show that $r \left(Q_{S_{X}}\left(y,\frac{1}{n}\right),Q_{S_X}\left(y',\frac{1}{n}\right)\right)$ converges to $\|y-y'\|$ on $S_X\times S_X$. Let $y,y'\in S_X$ and $(\delta_n)$ be a decreasing sequence of real numbers converging to $0$. Now, we claim that  $r \left(Q_{S_{X}}\left(y,\delta_n\right), Q_{S_X}\left(y',\delta_n\right)\right)$ converges to $\|y-y'\|$. Suppose there exists $\epsilon >0$ such that
\begin{center}
$r\left(Q_{S_{X}}(y,\delta_n),Q_{S_{X}}(y',\delta_n)\right)-\|y-y'\| \geq \epsilon$ for every $n\in \mathbb{N}$.	
\end{center}
Then there exist two sequences $(y_n),(y_n')$ in $S_X$ such that  for every $n \in \mathbb{N},$  $y_n\in Q_{S_{X}}(y,\delta_n),  y_n'\in Q_{S_{X}}(y',\delta_n)$ and $\|y_n-y_n'\|-\|y-y'\|>\frac{\epsilon}{2}$. Since $X$ is $LUR$, we have $y_n\to -y$ and $y_n'\to -y'$.
\begin{align*}
\frac{\epsilon}{2}\leq \|y_n-y_n'\|-\|y-y'\|&\leq \|y_n+y-y+y'-y'-y_n'\|-\|y-y'\|\\
		&\leq\|y_n+y\|+\|y'+y_n'\|,
\end{align*}
which is a contradiction. Thus, $r\left(Q_{S_{X}}(y,\delta_n),Q_{S_{X}}(y',\delta_n)\right)\to \|y-y'\|$ for every $y,y'\in S_X$. Hence the implication holds.

\noindent$(2)\Rightarrow (3)$: This implication follows by considering $x=x'$.

\noindent$(3)\Leftrightarrow(4)\Leftrightarrow(5)\Leftrightarrow(6)\Leftrightarrow(7)$: These implications follow from \Cref{remseq}.

\noindent$(4)\Rightarrow(1)$: Let $(x_n)$ be a sequence in $S_X$ and $x\in S_X$ such that $\|\frac{x_n+x}{2}\|\to 1$. Observe that $(x_n)$ is a maximizing sequence for $-x$. Thus, by assumption, we have $x_n\to x$.
\end{proof}

We remark that the notion strong unique remotality is different from strongly Chebyshev \cite{band}. For example, $B_X$ is strongly Chebyshev on $X\setminus\{0\}$ if and only if the space is midpoint locally uniformly rotund \cite[Theorem 5.3.28]{Megginson}. But, as a consequence of \Cref{BallSphereEqui} and \Cref{LURT}, we have $B_X$ is strongly uniquely remotal on $X\setminus\{0\}$ if and only if $X$ is LUR.	

In the following result we present a characterization for $LUR$ spaces, in terms of the sets $P_{S_X}(\cdot,\cdot)$ using generalized diameter. The proof follows in similar lines to the proof of \Cref{LURT}, also using \Cref{farclose} and \cite[Theorem 3.14]{thota}.

\begin{corollary}
	The following statements are equivalent.
	\begin{enumerate}
		\item $X$ is $LUR.$	
		\item $r\left(P_{S_X}\left(x,\frac{1}{n}\right), P_{S_X}\left(x',\frac{1}{n}\right)\right)\to \left\|\frac{x}{\|x\|}-\frac{x'}{\|x'\|}\right\|$ for every $x,x'\in X\setminus\{0\}$.		
		\item $diam \left(P_{S_{X}}\left(x,\frac{1}{n}\right)\right)\to 0$ for every $x\in X\setminus\{0\}.$
		\item Every minimizing sequence  in $S_X$ for $x\in X\setminus\{0\}$ converges to $\frac{x}{\|x\|}$.
		\item $P_{S_X}(x)=\left\{\frac{x}{\|x\|}\right\}$ and $P_{S_X}\left(x,\frac{1}{n}\right)  \xrightarrow{V} P_{S_X}(x)$ for every $x\in X\setminus\{0\}$.
		\item $P_{S_X}(x)=\left\{\frac{x}{\|x\|}\right\}$ and $P_{S_X}\left(x,\frac{1}{n}\right)  \xrightarrow{H} P_{S_X}(x)$ for every $x\in X\setminus\{0\}$.
		\item $S_X$ is strongly Chebyshev on $X\setminus\{0\}$.
	\end{enumerate}
\end{corollary}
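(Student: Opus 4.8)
The plan is to mirror the proof of \Cref{LURT}, replacing farthest points by nearest points and maximizing sequences by minimizing ones, and to pass between the two pictures through the inclusion $P_{S_X}(x,\delta)\subseteq Q_{S_X}(-x,\delta)$ supplied by \Cref{farclose}. The pointwise equivalences $(3)\Leftrightarrow(4)\Leftrightarrow(5)\Leftrightarrow(6)\Leftrightarrow(7)$ are the best-approximation analogue of \Cref{remseq} (with $Q_{S_X}$, maximizing sequences and SUR replaced by $P_{S_X}$, minimizing sequences and strong Chebyshevness), and I would obtain them from the set-convergence lemmas of \cite{Chapter} together with \cite[Theorem 3.14]{thota}, exactly as \Cref{remseq} was deduced. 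The one extra input is that $\frac{x}{\|x\|}$ is \emph{always} a best approximant of $x$ in $S_X$, since $\left\|x-\tfrac{x}{\|x\|}\right\|=\bigl|\|x\|-1\bigr|=d(x,S_X)$; hence whenever $P_{S_X}(x)$ is a singleton it must equal $\left\{\frac{x}{\|x\|}\right\}$, and every convergent minimizing sequence has limit $\frac{x}{\|x\|}$, which is what pins down the limits in $(4)$--$(6)$.

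For the forward implication $(1)\Rightarrow(2)$ I would argue as follows. By \Cref{farclose} we have $P_{S_X}(x,\tfrac1n)\subseteq Q_{S_X}(-x,\tfrac1n)$ and $P_{S_X}(x',\tfrac1n)\subseteq Q_{S_X}(-x',\tfrac1n)$, so the monotonicity and symmetry of the generalized diameter (\Cref{GD-properties}) give
\[
r\!\left(P_{S_X}(x,\tfrac1n),P_{S_X}(x',\tfrac1n)\right)\le r\!\left(Q_{S_X}(-x,\tfrac1n),Q_{S_X}(-x',\tfrac1n)\right).
\]
Applying $(1)\Rightarrow(2)$ of \Cref{LURT} to the points $-x,-x'$ bounds the right-hand side above by a sequence converging to $\left\|\frac{x}{\|x\|}-\frac{x'}{\|x'\|}\right\|$. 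For the matching lower bound, $\frac{x}{\|x\|}\in P_{S_X}(x)\subseteq P_{S_X}(x,\tfrac1n)$ and likewise for $x'$, so the left-hand side is always at least $\left\|\frac{x}{\|x\|}-\frac{x'}{\|x'\|}\right\|$; combining the two yields $(2)$. Taking $x=x'$ and using $r(A,A)=diam(A)$ gives $(2)\Rightarrow(3)$. This same bridge explains the easy directions: a minimizing sequence for $x$ is, by \Cref{farclose}, a maximizing sequence for $-x$, to which the relevant conclusion of \Cref{LURT} applies.

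The main obstacle is the return trip to $(1)$, say $(7)\Rightarrow(1)$. Here the asymmetry of \Cref{farclose} bites: its inclusion runs $P\subseteq Q$ \emph{only}, so smallness of the nearest-point sets does not on its face control the farthest-point sets that directly encode the midpoint condition defining LUR. Indeed, a failure of LUR produces $u\in S_X$ and $(y_n)\subseteq S_X$ with $\|y_n+u\|\to2$ but $\|y_n-u\|\ge\epsilon_0$, and this is a \emph{maximizing} (not minimizing) sequence for $-u$, so it cannot be fed directly into $(4)$. My plan is to close the loop exactly as the passage from \Cref{clur-char} to its approximative-compactness corollary does: combine the established pointwise equivalences with \cite[Theorem 3.14]{thota} to identify strong Chebyshevness of $S_X$ on $X\setminus\{0\}$ with LUR. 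As a sanity check on the mechanism, note that setting $w_n=u+y_n$ places both $u$ and $y_n$ in $P_{S_X}(w_n,\eta_n)$ with $\eta_n=2-\|w_n\|\to0$ while $\|u-y_n\|\ge\epsilon_0$, signalling the failure of strong Chebyshevness; the delicate point, and the step I expect to require the most care, is to anchor this witness at a single fixed point rather than the moving targets $w_n$, which is precisely where the cited best-approximation result does the work.
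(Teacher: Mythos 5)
Your proposal is correct and takes essentially the same route as the paper, whose entire proof is the one-line remark that the argument follows the lines of \Cref{LURT} together with \Cref{farclose} and \cite[Theorem 3.14]{thota}. Your bridge via the inclusion $P_{S_X}(x,\delta)\subseteq Q_{S_X}(-x,\delta)$ (with the lower bound coming from $\frac{x}{\|x\|}\in P_{S_X}(x)$), the nearest-point analogue of \Cref{remseq} for the pointwise equivalences, and the deferral of the genuinely delicate implication $(7)\Rightarrow(1)$ to the cited result are exactly what the paper intends.
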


Now, we proceed to characterize uniform rotundity in terms of the sets of almost farthest points $Q_{S_X}(\cdot,\cdot).$ For any $\alpha >0$, we denote $X_\alpha = X\setminus \alpha B_X.$

\begin{theorem}\label{URT}
	Let $\alpha>0$. Then the following statements are equivalent.
	\begin{enumerate}
		\item $X$ is $UR$.
		\item $r\left(Q_{S_X}\left(x,\frac{1}{n}\right),Q_{S_X}\left(x',\frac{1}{n}\right)\right)\to \left\|\frac{x}{\|x\|}-\frac{x'}{\|x'\|}\right\|$ uniformly on $X_\alpha \times X_\alpha$.
		\item $diam \left(Q_{S_{X}}\left(x,\frac{1}{n}\right)\right)\to 0$ uniformly on $x\in X_\alpha$.
		\item $Q_{S_X}(x)=\left\{\frac{-x}{\|x\|}\right\}$ and $Q_{S_X}\left(x,\frac{1}{n}\right) \xrightarrow{H} Q_{S_X}(x)$ uniformly on $x\in X_\alpha.$
		\item $S_X$ is USUR on $X_\alpha$.
	\end{enumerate}
\end{theorem}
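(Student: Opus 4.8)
The plan is to prove \Cref{URT} by establishing the cycle $(1)\Rightarrow(2)\Rightarrow(3)\Rightarrow(4)\Rightarrow(5)\Rightarrow(1)$, mirroring the structure of the proof of \Cref{LURT} but tracking every quantifier so that all estimates are uniform on $X_\alpha$. The backbone of the argument is that almost-farthest-point sets for points far from the origin are controlled by those for points on $S_X$: by \Cref{CAP}(3) and \Cref{cont}, since $\|x\|\geq\alpha$ for $x\in X_\alpha$, the factor $l=\max\{1,\frac{1}{\|x\|},\frac{1}{\|x'\|}\}$ appearing in \Cref{GD-ineq} is bounded above by $\max\{1,\frac{1}{\alpha}\}$ uniformly over $X_\alpha$. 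This uniform bound on $l$ is precisely what upgrades the pointwise reductions used in \Cref{LURT} to uniform ones, so the whole theorem reduces to proving the uniform convergence statement on $S_X\times S_X$.

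For $(1)\Rightarrow(2)$, I would argue by contradiction exactly as in \Cref{LURT}, but negating uniformity: assuming the convergence is not uniform on $S_X\times S_X$ produces an $\epsilon>0$, a sequence $\delta_n\downarrow 0$, and sequences $(y_n),(y_n')$ in $S_X$ with $y_n\in Q_{S_X}(y_n,\delta_n)$-type membership and $\|y_n-z_n\|-\|y_n-y_n'\|>\frac{\epsilon}{2}$ where $z_n\in Q_{S_X}(y_n,\delta_n)$, $z_n'\in Q_{S_X}(y_n',\delta_n)$. Here one should extract that $z_n$ is a near-farthest point from $y_n$, so $\|z_n+y_n\|\to 2$, whence by $UR$ we get $\|z_n+y_n\|\to 2$ forces the relevant difference to vanish; the key point is that uniform rotundity converts the two sequences $(z_n),(-y_n)$ (and similarly $(z_n'),(-y_n')$) satisfying $\left\|\frac{z_n-y_n}{2}\right\|\to 1$ into $\|z_n+y_n\|\to 0$-type control, contradicting the $\frac{\epsilon}{2}$ gap via the same triangle-inequality chain $\|z_n-z_n'\|-\|y_n-y_n'\|\leq\|z_n+y_n\|+\|y_n'+z_n'\|$. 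The implication $(2)\Rightarrow(3)$ is immediate by setting $x=x'$, and $(3)\Leftrightarrow(4)\Leftrightarrow(5)$ should follow from the uniform version of \Cref{remseq}: statement $(3)$ is the uniform analogue of $diam(Q_F(x,\frac1n))\to 0$, $(4)$ is the uniform analogue of the singleton-plus-Hausdorff-convergence condition, and $(5)$ is USUR, all related by the same $\epsilon$--$\delta$ bookkeeping as in the proof of \Cref{remseq}, now with $\delta$ independent of $x\in X_\alpha$.

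For $(5)\Rightarrow(1)$ I would take sequences $(x_n),(y_n)$ in $S_X$ with $\left\|\frac{x_n+y_n}{2}\right\|\to 1$ and aim to show $\|x_n-y_n\|\to 0$. The natural move is to locate a point $w_n$ (roughly $-\frac{x_n+y_n}{\|x_n+y_n\|}$, pushed out to radius $\geq\alpha$ if necessary using \Cref{CAP}) for which both $x_n$ and $y_n$ are almost-farthest points, i.e. both lie in $Q_{S_X}(w_n,\delta_n)$ for some $\delta_n\to 0$; then USUR on $X_\alpha$ forces them together. Concretely, since $\|x_n-w_n\|$ and $\|y_n-w_n\|$ both approach $r(S_X,w_n)=1+\|w_n\|$ when $\left\|\frac{x_n+y_n}{2}\right\|\to1$, both points eventually sit in $Q_{S_X}(w_n,\delta)$ for any fixed $\delta>0$, and the uniform $\delta=\delta(\epsilon)$ from USUR together with the singleton $Q_{S_X}(w_n)=\{-w_n/\|w_n\|\}$ gives $\|x_n-y_n\|<2\epsilon$ for large $n$.

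The main obstacle I anticipate is the bookkeeping in $(5)\Rightarrow(1)$: one must verify that the auxiliary points $w_n$ can be chosen in $X_\alpha$ (not merely in $X\setminus\{0\}$) while still having $x_n,y_n$ land in the small almost-farthest-point sets, and that the estimate $\|x_n-w_n\|\geq r(S_X,w_n)-\delta$ genuinely follows from $\left\|\frac{x_n+y_n}{2}\right\|\to 1$ uniformly in $n$. The \Cref{CAP} invariance under positive scaling of the center is the tool that lets me rescale $w_n$ out past radius $\alpha$ without disturbing which points are maximizing, so the real work is packaging the near-farthest condition $\|x_n-w_n\|\to r(S_X,w_n)$ from the hypothesis $\left\|\frac{x_n+y_n}{2}\right\|\to 1$ cleanly. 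Everything else is a faithful uniform-quantifier transcription of the proofs of \Cref{LURT} and \Cref{remseq}.
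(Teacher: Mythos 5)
Your proposal is correct, and for $(1)\Rightarrow(2)\Rightarrow(3)\Rightarrow(4)\Rightarrow(5)$ it coincides with the paper's proof: the same uniform bound $l\leq\max\left\{1,\frac{1}{\alpha}\right\}$ from \Cref{GD-ineq} reduces $(1)\Rightarrow(2)$ to uniform convergence on $S_X\times S_X$, the same UR-contradiction via $\|z_k-z_k'\|-\|y_k-y_k'\|\leq\|z_k+y_k\|+\|z_k'+y_k'\|$ finishes it, and the rest is the quantifier bookkeeping you describe (the paper only needs $(3)\Rightarrow(4)\Rightarrow(5)$, since it closes the cycle at $(5)\Rightarrow(1)$). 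The genuine difference is in $(5)\Rightarrow(1)$. The paper uses no auxiliary center: after transferring USUR from $X_\alpha$ to $S_X$ by \Cref{CAP}, the hypothesis $\left\|\frac{x_n+y_n}{2}\right\|\to1$ already says $\|x_n-(-y_n)\|\to 2=r(S_X,x_n)$, so $-y_n\in Q_{S_X}(x_n,\delta)$ eventually, and USUR together with $Q_{S_X}(x_n)=\{-x_n\}$ forces $\|x_n-y_n\|\leq\epsilon$, a contradiction. Your route through the normalized midpoint $w_n=-\frac{x_n+y_n}{\|x_n+y_n\|}$ also closes, but the step you flag as ``the real work'' is a genuine obligation that the paper's choice of center avoids entirely: to get $x_n\in Q_{S_X}(w_n,\delta_n)$ with $\delta_n\to0$ you must show $\left\Vert x_n+\frac{x_n+y_n}{2}\right\Vert\to2$, which is not a bare triangle inequality. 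It does hold, via the convexity identity $\frac{u+v}{2}=\frac{1}{2}\left[tu+(1-t)v\right]+\frac{1}{2}\left[(1-t)u+tv\right]$, which yields $\left\Vert\frac{3x_n+y_n}{4}\right\Vert\geq 2\left\Vert\frac{x_n+y_n}{2}\right\Vert-1\to1$ and hence $\Vert x_n-w_n\Vert\geq\left\Vert x_n+\frac{x_n+y_n}{2}\right\Vert-\left(1-\left\Vert\frac{x_n+y_n}{2}\right\Vert\right)\to 2$. With that estimate supplied (and the scaling of $w_n$ into $X_\alpha$ handled through \Cref{cont} and \Cref{CAP} exactly as you propose), your argument is complete; the paper's version of $(5)\Rightarrow(1)$ buys the same conclusion with none of this extra work.
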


\begin{proof}
$(1)\Rightarrow(2)$: 
Let $y \in X_\alpha$ and $l=\max\{1,\frac{1}{\alpha}\}.$  Then, it follows from the proof of \Cref{GD-ineq} that for every $n \in \mathbb{N}$ $$Q_{S_X}\left(y,\frac{1}{n}\right)\subseteq  Q_{S_X}\left(\frac{y}{\|y\|},\frac{l}{n}\right)$$
holds. Thus, for any $x, x'\in X_\alpha$ we have
\begin{align*}
		\left\|\frac{x}{\|x\|}-\frac{x'}{\|x'\|}\right\|&\leq r\left(Q_{S_X}\left(x,\frac{1}{n}\right),Q_{S_X}\left(x',\frac{1}{n}\right)\right)\\
		&\leq r\left(Q_{S_X}\left(\frac{x}{\|x\|},\frac{l}{n}\right),Q_{S_X}\left(\frac{x'}{\|x'\|},\frac{l}{n}\right)\right).
\end{align*}
Therefore, it is enough to show that $r\left(Q_{S_{X}}\left(y,\frac{1}{n}\right),Q_{S_X}\left(y',\frac{1}{n}\right)\right)$ converges uniformly to $\|y-y'\|$ on $S_X\times S_X$. Suppose there exist $\epsilon >0$ and two sequences $(y_k),(y_k')$ in $S_X$ such that $$ r\left(Q_{S_{X}}\left(y_k,\frac{1}{k}\right),Q_{S_X}\left(y_k',\frac{1}{k}\right)\right)-\|y_k-y_k'\|\geq \epsilon.$$ for all $k\in \mathbb{N}$. Now, for every $k\in \mathbb{N}$, choose $z_k\in Q_{S_{X}}\left(y_k,\frac{1}{k}\right)$ and $z_k'\in Q_{S_X}\left(y_k',\frac{1}{k}\right)$ such that $\|z_k-z_k'\|-\|y_k-y_k'\|>\frac{\epsilon}{2}$. Since $X$ is $UR$, it is easy to verify that $\|y_k+z_k\|\to 0$ and $\|y_k'+z_k'\|\to 0$. However, for any $k\in \mathbb{N}$, we have
	$$\frac{\epsilon}{2}<\|z_k-z_k'\|-\|y_k-y_k'\|\leq\|z_k+y_k\|+\|z_k'+y_k'\|.$$ which is a contradiction. Thus, the implication holds.

\noindent$(2)\Rightarrow (3)$: This implication follows by considering $x=x'$.

\noindent$(3)\Rightarrow (4)$:
For any $x \in X_\alpha$, by \Cref{remseq}, we have $Q_{S_X}(x)= \left\{\frac{-x}{\|x\|}\right\}.$
To see the other part, let $\epsilon >0$. By assumption, there exists $n_0\in \mathbb{N}$ such that  $diam\left(Q_{S_X}\left(x,\frac{1}{n}\right)\right)<\epsilon$ for every $n\geq n_0$ and $x\in X_\alpha$. Notice that $\left\|y+\frac{x}{\|x\|}\right\|<\epsilon$ whenever $y\in Q_{S_X}\left(x,\frac{1}{n}\right)$, $x\in X_\alpha$ and $n\geq n_0$. Thus, $H\left(Q_{S_X}\left(x,\frac{1}{n}\right), Q_{S_X}(x)\right)<\epsilon$ for every $n\geq n_0$ and $x\in X_\alpha$, the implication holds.

\noindent$(4)\Rightarrow (5)$: Obvious.

\noindent$(5)\Rightarrow (1)$: Suppose there exists $\epsilon>0$, two sequences $(x_n),(y_n)$ in $S_X$ such that $\left\|\frac{x_n+y_n}{2}\right\| \to 1$ but $\|x_n-y_n\| > \epsilon$. Since, by assumption and \Cref{CAP}, there exists $\delta>0$ such that $Q_{S_X}(x,\delta)\subseteq Q_{S_X}(x)+\epsilon B_X$ and $Q_{S_X}(x)=\{-x\}$ for every $x\in S_X$. For this $\delta>0$ there exists $n_0\in \mathbb{N}$ such that $2\geq \|x_{n}-(-y_{n})\|\geq 2-\delta$ for every $n\geq n_0$. Observe that $-y_n\in Q_{S_X}(x_n,\delta)$ for every $n\geq n_0$. Since $\|y_{n}-x_{n}\|>\epsilon$, we have $Q_{S_X}(x_n,\delta)\nsubseteq Q_{S_X}(-x_n)+\epsilon B_X$ for every $n\geq n_0$. This is a contradiction.
\end{proof}

In the following result we present a characterization of $UR$, in terms of sets $P_{S_X}(\cdot,\cdot)$ using generalized diameter. The proof follows in similar lines of the proof of previous theorem, \Cref{farclose} and \cite[Remark 3.4]{UC}. For the definition of uniformly strongly Chebyshev we refer to \cite{UC}.

\begin{corollary}
	The following statements are equivalent.
	\begin{enumerate}
		\item $X$ is $UR$.
		\item $r\left(P_{S_X}\left(x,\frac{1}{n}\right),P_{S_X}\left(x',\frac{1}{n}\right)\right)\to \left\|\frac{x}{\|x\|}-\frac{x'}{\|x'\|}\right\|$ uniformly on $X_1\times X_1$.
		\item $diam \left(P_{S_{X}}\left(x,\frac{1}{n}\right)\right)\to 0$ uniformly on $x\in X_1.$
		\item $P_{S_X}(x)=\left\{\frac{x}{\|x\|}\right\}$ and $P_{S_X}\left(x,\frac{1}{n}\right)  \xrightarrow{H } P_{S_X}(x)$ uniformly on $x\in X_1.$
		\item $S_X$ is uniformly strongly Chebyshev on $X_1$.
	\end{enumerate}
\end{corollary}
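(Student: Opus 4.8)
The plan is to mirror the proof of \Cref{URT} step by step, transporting every estimate from the farthest--point sets $Q_{S_X}$ to the nearly best approximation sets $P_{S_X}$ by means of the inclusion $P_{S_X}(x,\delta)\subseteq Q_{S_X}(-x,\delta)$ furnished by \Cref{farclose}(2). I will run the cycle $(1)\Rightarrow(2)\Rightarrow(3)\Rightarrow(4)\Rightarrow(5)\Rightarrow(1)$. For $(1)\Rightarrow(2)$, the lower bound
$$\left\|\tfrac{x}{\|x\|}-\tfrac{x'}{\|x'\|}\right\|\le r\left(P_{S_X}\left(x,\tfrac1n\right),P_{S_X}\left(x',\tfrac1n\right)\right)$$
is immediate, since $\tfrac{x}{\|x\|}$ realizes the distance $\|x\|-1$ from $x$ to $S_X$ and hence lies in every $P_{S_X}\left(x,\tfrac1n\right)$ for $x\in X_1$. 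For the matching upper bound, \Cref{farclose}(2) gives $P_{S_X}\left(x,\tfrac1n\right)\subseteq Q_{S_X}\left(-x,\tfrac1n\right)$, so by the monotonicity of the generalized diameter (\Cref{GD-properties}(6)) one obtains
$$r\left(P_{S_X}\left(x,\tfrac1n\right),P_{S_X}\left(x',\tfrac1n\right)\right)\le r\left(Q_{S_X}\left(-x,\tfrac1n\right),Q_{S_X}\left(-x',\tfrac1n\right)\right).$$
Since $x,x'\in X_1$ forces $-x,-x'\in X_1$, the right-hand side converges uniformly on $X_1\times X_1$ to $\left\|\tfrac{-x}{\|-x\|}-\tfrac{-x'}{\|-x'\|}\right\|=\left\|\tfrac{x}{\|x\|}-\tfrac{x'}{\|x'\|}\right\|$ by \Cref{URT}$(1)\Rightarrow(2)$ applied with $\alpha=1$, and squeezing between the two bounds yields $(2)$.

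The implication $(2)\Rightarrow(3)$ follows by taking $x=x'$, because $r(A,A)=diam(A)$ and the limit is then $0$. For $(3)\Rightarrow(4)$ I would argue exactly as in the corresponding step of \Cref{URT}: the point $\tfrac{x}{\|x\|}$ belongs to $P_{S_X}(x)=\bigcap_n P_{S_X}\left(x,\tfrac1n\right)$, while the uniform vanishing of the diameters forces $P_{S_X}(x)=\left\{\tfrac{x}{\|x\|}\right\}$; moreover every $y\in P_{S_X}\left(x,\tfrac1n\right)$ satisfies $\left\|y-\tfrac{x}{\|x\|}\right\|\le diam\left(P_{S_X}\left(x,\tfrac1n\right)\right)$, which gives $H\left(P_{S_X}\left(x,\tfrac1n\right),P_{S_X}(x)\right)\to0$ uniformly on $X_1$. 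Here the equivalences $(3)\Leftrightarrow(4)\Leftrightarrow(5)$ are precisely the nearest-point counterpart of \Cref{remseq}, which is supplied by \cite[Remark 3.4]{UC}; in particular $(4)\Rightarrow(5)$ is then immediate, the uniform Hausdorff convergence providing, for each $\epsilon>0$, a single $\delta>0$ with $P_{S_X}(x,\delta)\subseteq P_{S_X}(x)+\epsilon B_X$ for all $x\in X_1$, which is the uniform strong Chebyshevness of $S_X$ on $X_1$.

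It remains to prove $(5)\Rightarrow(1)$, which I regard as the main obstacle, since the inclusion of \Cref{farclose}(2) runs with $P$ inside $Q$ and therefore cannot be used to deduce a failure on the $P$-side from one on the $Q$-side; a direct argument is needed and the delicate point is the choice of witnesses in $X_1$. Assuming $X$ is not $UR$, I would take $(x_n),(y_n)$ in $S_X$ with $\left\|\tfrac{x_n+y_n}{2}\right\|\to1$ but $\|x_n-y_n\|>\epsilon_0$, and test $(5)$ at the points $z_n=x_n+y_n$. These lie in $X_1$ eventually, because $\|z_n\|=2\left\|\tfrac{x_n+y_n}{2}\right\|\to2$; the distance from $z_n$ to $S_X$ is $\|z_n\|-1\to1$, while $\|z_n-x_n\|=\|y_n\|=1$ and $\|z_n-y_n\|=\|x_n\|=1$, so both $x_n$ and $y_n$ lie in $P_{S_X}(z_n,\eta_n)$ with $\eta_n=2\left(1-\left\|\tfrac{x_n+y_n}{2}\right\|\right)\to0$. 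Applying the uniform strong Chebyshevness with $\epsilon=\epsilon_0/3$ and the corresponding $\delta$, for all large $n$ (so that $\eta_n<\delta$) both $x_n$ and $y_n$ fall within $\epsilon_0/3$ of the single point of $P_{S_X}(z_n)$, forcing $\|x_n-y_n\|\le 2\epsilon_0/3<\epsilon_0$, a contradiction. The crux of the whole proof is thus the verification that the witness $z_n=x_n+y_n$ simultaneously lands in $X_1$ and makes $x_n,y_n$ near-best approximants; everything else is a routine transcription of \Cref{URT} via \Cref{farclose}.
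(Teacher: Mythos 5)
Your proof is correct, and its skeleton is exactly what the paper intends: the paper's own ``proof'' is a one-line remark saying the argument follows the lines of \Cref{URT} together with \Cref{farclose} and the citation \cite[Remark 3.4]{UC}, and your steps $(1)\Rightarrow(2)\Rightarrow(3)\Rightarrow(4)\Rightarrow(5)$ are precisely that transcription, with the inclusion $P_{S_X}(x,\delta)\subseteq Q_{S_X}(-x,\delta)$ and the monotonicity of $r(\cdot,\cdot)$ doing the transfer, and with the correct observation that $\frac{x}{\|x\|}\in P_{S_X}(x)$ for $x\in X_1$ supplying the lower bound that plays the role of $\left\|\frac{x}{\|x\|}-\frac{x'}{\|x'\|}\right\|\leq r\left(Q_{S_X}\left(x,\frac1n\right),Q_{S_X}\left(x',\frac1n\right)\right)$ in \Cref{URT}. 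Where you genuinely depart from the paper is the closing implication $(5)\Rightarrow(1)$: the paper outsources the nearest-point side of this equivalence to \cite[Remark 3.4]{UC}, whereas you prove it from scratch, and you correctly diagnose why this cannot be lifted from the $Q$-side statement (the inclusion in \Cref{farclose} runs the wrong way for deducing a $P$-failure from a $Q$-failure). Your witness $z_n=x_n+y_n$ --- which lands in $X_1$ eventually since $\|z_n\|\to 2$, and makes $x_n,y_n$ near-best approximants with tolerance $\eta_n=2-\|z_n\|\to 0$ --- is the natural nearest-point analogue of the paper's direct argument for $(5)\Rightarrow(1)$ in \Cref{URT}, where the test points live on $S_X$ and $-y_n$ serves as the almost farthest point. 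The trade-off is clear: the paper's citation keeps the corollary to one sentence, while your version makes it self-contained and exhibits explicitly the geometric trick (testing at $x_n+y_n$) that the reference encapsulates; both are sound.
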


\begin{remark} The \Cref{rotund,clur-char,LURT,URT} are also hold with the following modifications.
\begin{enumerate}
	\item  In view of \Cref{CAP}, it is enough to consider the $\alpha S_X$ for some $\alpha >0$ instead of $X \setminus \{0\}$ and  $X_1.$

	\item In view of \Cref{BallSphereEqui}, characterizations of rotund, CLUR, LUR and  UR spaces hold in terms of the closed unit ball as well.
\end{enumerate}
\end{remark}

In the following two results we present necessary conditions for a space to be $UR$ and $LUR$ respectively. 
 The distance between two non-empty closed sets $A$ and $B$ is defined as $d(A,B)=\inf \{\|x-y\|: x\in A,\ y\in B\}.$

\begin{theorem}\label{iUR}
	Consider the following statements.
	\begin{enumerate}
		\item $X$ is $UR$.
		\item $d\left(Q_{S_X}\left(x,\frac{1}{n}\right),Q_{S_X}\left(x',\frac{1}{n}\right)\right)\to \|x-x'\|$ uniformly on $S_X\times S_X$.
		\item $d\left(Q_{S_X}\left(x,\frac{1}{n}\right),Q_{S_X}\left(-x,\frac{1}{n}\right)\right)\to 2$ uniformly on  $x\in S_X$.\end{enumerate}
	Then, $(1)\Rightarrow (2)\Rightarrow (3)$.
\end{theorem}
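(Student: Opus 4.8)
The plan is to obtain both implications directly from the uniform Hausdorff convergence already established in \Cref{URT}. First I record the elementary fact that for $x\in S_X$ one has $r(S_X,x)=2$, the farthest distance being attained (uniquely, by rotundity) at the antipode $-x$; in particular $-x\in Q_{S_X}(x)\subseteq Q_{S_X}\left(x,\frac{1}{n}\right)$ for every $n\in\mathbb{N}$. This observation is what makes the upper bound on the set-distance automatic.

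For $(1)\Rightarrow(2)$, I would fix $\epsilon>0$ and choose any $\alpha\in(0,1)$ so that $S_X\subseteq X_\alpha$. Then statement $(4)$ of \Cref{URT} supplies $n_0\in\mathbb{N}$ with $Q_{S_X}\left(x,\frac{1}{n}\right)\subseteq\{-x\}+\epsilon B_X$ for every $n\geq n_0$ and every $x\in S_X$, uniformly. The upper estimate comes for free: since $-x\in Q_{S_X}\left(x,\frac{1}{n}\right)$ and $-x'\in Q_{S_X}\left(x',\frac{1}{n}\right)$, we get $d\left(Q_{S_X}\left(x,\frac{1}{n}\right),Q_{S_X}\left(x',\frac{1}{n}\right)\right)\leq\|(-x)-(-x')\|=\|x-x'\|$ for all $n$ and all $x,x'\in S_X$. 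For the lower estimate, take any $u\in Q_{S_X}\left(x,\frac{1}{n}\right)$ and $v\in Q_{S_X}\left(x',\frac{1}{n}\right)$ with $n\geq n_0$; then $\|u+x\|<\epsilon$ and $\|v+x'\|<\epsilon$, whence
\[
\|u-v\|\geq\|x-x'\|-\|u+x\|-\|v+x'\|>\|x-x'\|-2\epsilon.
\]
Taking the infimum over $u,v$ yields $d\left(Q_{S_X}\left(x,\frac{1}{n}\right),Q_{S_X}\left(x',\frac{1}{n}\right)\right)\geq\|x-x'\|-2\epsilon$. Combining the two estimates gives $\left|d\left(Q_{S_X}\left(x,\frac{1}{n}\right),Q_{S_X}\left(x',\frac{1}{n}\right)\right)-\|x-x'\|\right|\leq 2\epsilon$ for all $n\geq n_0$ and all $(x,x')\in S_X\times S_X$, which is precisely the claimed uniform convergence.

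For $(2)\Rightarrow(3)$, the implication is a pure specialization: I would restrict the uniform convergence in $(2)$ to the antipodal pairs $(x,-x)\in S_X\times S_X$. Since $\|x-(-x)\|=2$ for every $x\in S_X$, statement $(2)$ immediately gives $d\left(Q_{S_X}\left(x,\frac{1}{n}\right),Q_{S_X}\left(-x,\frac{1}{n}\right)\right)\to 2$ uniformly on $x\in S_X$.

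Neither implication presents a genuine obstacle once \Cref{URT} is available. The only two points requiring attention are recognizing that the antipode $-x$ is always an exact (hence almost) farthest point, which makes the upper bound on the distance trivial, and that the uniform Hausdorff convergence of \Cref{URT} pins down the location of \emph{every} almost farthest point uniformly in $x$, which produces the lower bound with a constant ($2\epsilon$) independent of the pair. The step $(2)\Rightarrow(3)$ is entirely formal, amounting to restricting a uniform statement over $S_X\times S_X$ to the subfamily $\{(x,-x):x\in S_X\}$.
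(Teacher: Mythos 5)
Your proposal is correct, but it takes a genuinely different route from the paper's. The paper proves $(1)\Rightarrow(2)$ directly from the definition of uniform rotundity, by contradiction: if the uniform convergence fails, it extracts sequences $(x_n),(x_n')$ in $S_X$ together with points $y_n\in Q_{S_X}\left(x_n,\frac{1}{n}\right)$ and $y_n'\in Q_{S_X}\left(x_n',\frac{1}{n}\right)$ satisfying $\|x_n-x_n'\|-\|y_n-y_n'\|>\frac{\epsilon}{2}$, and then combines the triangle inequality $\|x_n-x_n'\|-\|y_n-y_n'\|\leq\|x_n+y_n\|+\|x_n'+y_n'\|$ with the fact that $UR$ forces $\|x_n+y_n\|\to 0$ and $\|x_n'+y_n'\|\to 0$ (since $\left\|\frac{x_n-(-y_n)}{2}\right\|\to 1$), reaching a contradiction. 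You instead deduce everything from the uniform Hausdorff convergence already established as statement $(4)$ of \Cref{URT}, applied with $\alpha\in(0,1)$ so that $S_X\subseteq X_\alpha$; this is legitimate and non-circular, since \Cref{URT} precedes \Cref{iUR} and its proof does not use it. Your route buys two things: the argument is direct and quantitative (an explicit $2\epsilon$ bound valid for all $n\geq n_0$ and all pairs), and it makes explicit the upper estimate $d\left(Q_{S_X}\left(x,\frac{1}{n}\right),Q_{S_X}\left(x',\frac{1}{n}\right)\right)\leq\|x-x'\|$ via $-x\in Q_{S_X}\left(x,\frac{1}{n}\right)$ --- a fact the paper's proof uses only implicitly when it assumes that the failure of uniform convergence takes the one-sided form $\|x_n-x_n'\|-d(\cdot,\cdot)>\epsilon$; your observation is exactly what justifies that reduction. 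What the paper's approach buys in return is self-containedness: it needs only the definition of $UR$, not the heavier USUR/uniform Hausdorff machinery of \Cref{URT}. Both arguments handle $(2)\Rightarrow(3)$ identically, by specializing to $x'=-x$.
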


\begin{proof}
$(1)\Rightarrow (2)$: Suppose $d\left(Q_{S_X}\left(x,\frac{1}{n}\right),Q_{S_X}\left(x',\frac{1}{n}\right)\right)$ does not converge uniformly to $\|x-x'\|$ on $S_X\times S_X$. Then there exist $\epsilon>0$ and two sequences $(x_n),(x_n')$ in $S_X$ such that $$\|x_n-x_n'\|-d\left(Q_{S_X}\left(x_n,\frac{1}{n}\right),Q_{S_X}\left(x_n',\frac{1}{n}\right)\right)>\epsilon,$$ for all $n\in \mathbb{N}$.
	Thus, for every $n\in \mathbb{N}$, there exist $y_n\in Q_{S_X}\left(x,\frac{1}{n}\right)$ and $y_n'\in Q_{S_X}\left(x',\frac{1}{n}\right)$ satisfying
	$$\frac{\epsilon}{2}<\|x_n-x_n'\|-\|y_n-y_n'\| <\|x_n+y_n\|+\|x_n'+y_n'\|.$$
	Since $X$ is $UR$, it follows that $\|x_n+y_n\|\to 0$ and $\|x_n'+y_n'\|\to 0$, which is a contradiction. Hence the implication holds.

\noindent$(2) \Rightarrow (3)$: This implication follows by taking $x'=-x$.
\end{proof}


\begin{theorem}\label{ilur}
	Consider the following statements.
	\begin{enumerate}
		\item $X$ is $LUR$.
		\item $d\left(Q_{S_X}\left(x,\frac{1}{n}\right),Q_{S_X}\left(x',\frac{1}{n}\right)\right)\to \|x-x'\|$ for every $x,x'\in S_X$.
		\item $d\left(Q_{S_X}\left(x,\frac{1}{n}\right),Q_{S_X}\left(-x,\frac{1}{n}\right)\right)\to 2$ for every $x\in S_X$.\end{enumerate}
	Then, $(1)\Rightarrow (2)\Rightarrow (3)$.
\end{theorem}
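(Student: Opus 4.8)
The plan is to mirror the proof of \Cref{iUR}, replacing the uniform estimate by a pointwise one and invoking local uniform rotundity \emph{at the fixed points} $x,x'$ in place of uniform rotundity along varying sequences. The implication $(2)\Rightarrow(3)$ is immediate: taking $x'=-x$ gives $\|x-x'\|=\|2x\|=2$, so $(3)$ is just the special case $x'=-x$ of $(2)$. Hence the substance is $(1)\Rightarrow(2)$.

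First I would record the trivial upper bound. Since $-x\in Q_{S_X}(x)\subseteq Q_{S_X}(x,\tfrac1n)$ and likewise $-x'\in Q_{S_X}(x',\tfrac1n)$ for every $n$, the pair $(-x,-x')$ witnesses $d\bigl(Q_{S_X}(x,\tfrac1n),Q_{S_X}(x',\tfrac1n)\bigr)\le\|x-x'\|$. Moreover, by the nesting $Q_{S_X}(x,\tfrac1{n+1})\subseteq Q_{S_X}(x,\tfrac1n)$ from \Cref{Q_F}, the numbers $d_n:=d\bigl(Q_{S_X}(x,\tfrac1n),Q_{S_X}(x',\tfrac1n)\bigr)$ are non-decreasing in $n$, so they converge to some $L\le\|x-x'\|$. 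It therefore suffices to rule out $L<\|x-x'\|$.

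Assuming $L<\|x-x'\|$, I would set $\epsilon=\|x-x'\|-L>0$ and, for each $n$, choose $y_n\in Q_{S_X}(x,\tfrac1n)$ and $y_n'\in Q_{S_X}(x',\tfrac1n)$ with $\|y_n-y_n'\|<d_n+\tfrac1n\le L+\tfrac1n$, so that $\|x-x'\|-\|y_n-y_n'\|>\tfrac{\epsilon}{2}$ for all large $n$. The identity $x-x'=(x+y_n)-(y_n-y_n')-(x'+y_n')$ together with the triangle inequality yields
\[
\|x-x'\|-\|y_n-y_n'\|\le\|x+y_n\|+\|x'+y_n'\|.
\]
Now $y_n\in Q_{S_X}(x,\tfrac1n)$ forces $\|x-y_n\|\ge 2-\tfrac1n\to2$, hence $\bigl\|\tfrac{x+(-y_n)}{2}\bigr\|\to1$ with $x,-y_n\in S_X$; by local uniform rotundity \emph{at} $x$ this gives $-y_n\to x$, i.e. $\|x+y_n\|\to0$, and symmetrically $\|x'+y_n'\|\to0$. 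This contradicts $\tfrac{\epsilon}{2}<\|x+y_n\|+\|x'+y_n'\|$, completing $(1)\Rightarrow(2)$.

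The only delicate point is that the argument now applies LUR at the two fixed centres $x$ and $x'$ separately, to the auxiliary sequences $(-y_n)$ and $(-y_n')$, rather than UR along moving base points $(x_n)$ as in \Cref{iUR}; the monotone upper bound $d_n\le\|x-x'\|$ is exactly what lets a single reduction to the case $L<\|x-x'\|$ take the place of the uniform-failure hypothesis used there.
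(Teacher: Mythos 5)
Your proposal is correct and follows essentially the same route as the paper: the paper states \Cref{ilur} without a separate proof precisely because it is the pointwise analogue of \Cref{iUR}, and your argument is exactly that analogue --- the decomposition $x-x'=(x+y_n)-(y_n-y_n')-(x'+y_n')$ with LUR applied at the fixed centres $x$ and $x'$ to force $\|x+y_n\|\to 0$ and $\|x'+y_n'\|\to 0$. Your additional observation that $d_n$ is monotone (via the nesting in \Cref{Q_F}) and bounded by $\|x-x'\|$ is a clean, correct way to replace the subsequence extraction implicit in the contradiction argument, but it does not change the substance of the proof.
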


The following example illustrates that the reverse implications in \Cref{iUR,ilur} need not hold in general.

\begin{example}
Let $X=(\mathbb{R}^2,\|.\|_\infty)$ and $x=(x_1,x_2)\in S_X$. Then, it is easy to see that $d\left(Q_{S_X}\left(x,\frac{1}{n}\right),Q_{S_X}\left(-x,\frac{1}{n}\right)\right)= 2\left(1-\frac{1}{n}\right)$ for all $n \in \mathbb{N}$ and converges to $2$ uniformly on $S_X$. However, $X$ is not $LUR$($UR$).
\end{example}

\section*{Acknowledgments}
The first named author would like to thank the National Institute of Technology Tiruchirappalli, India for the financial support.

\end{document}